\title{The join construction}
\date{\today}
\author{Egbert Rijke}
\address{Department of Philosophy\\
Carnegie Mellon University\\
Pittsburgh, PA  15213 }
\email{erijke@andrew.cmu.edu}
\begin{document}

\maketitle

\begin{abstract}
In homotopy type theory we can define the join of maps as a binary operation on 
maps with a common co\-domain.
This operation is commutative, associative, and the unique map from the empty type into the
common codomain is a neutral element. 
Moreover, we show that the idempotents of the join of maps are precisely the 
embeddings,
and we prove the `join connectivity theorem', which states that the 
connectivity of the join of maps equals the join of the connectivities of
the individual maps.

We define the image of a map $f:A\to X$ in $\UU$ via the join construction,
as the colimit of the finite join powers of $f$. 
The join powers therefore provide approximations of the image inclusion,
and the join connectivity theorem implies that the 
approximating maps into the image increase in connectivity.

A modified version of the join construction can be used to show that for
any map $f:A\to X$ in which $X$ is only assumed to be locally small, the
image is a small type. 
We use the modified join construction to give an alternative construction of
set-quotients, the Rezk completion of a precategory, and we define
the $n$-truncation for any $n:\N$.
Thus we see that each of these are definable operations on a univalent universe
for Martin-L\"of type theory with a natural numbers object, that is moreover
closed under homotopy coequalizers.

\medskip
\noindent\textbf{Keywords.}~%
Homotopy type theory,
Univalence axiom,
Image factorization,
Truncation.
\end{abstract}

\tableofcontents

\section{Introduction}
Homotopy type theory extends Martin-L\"of's dependent type theory
\cite{MartinLof84}, with ideas from Awodey and Warren's homotopical interpretation
of identity types \cite{AwodeyWarren} and Voevodsky's construction of the
model of Martin-L\"of type theory with the univalence axiom in the simplicial sets \cite{KapulkinLeFanuLumsdaine}.
The univalence axiom was proposed by Voevodsky in \cite{Voevodsky06,Voevodsky10}.
In addition to the univalence axiom, the addition of higher inductive types
was proposed by Lefanu-Lumsdaine \cite{Lumsdaine11Blog} and Shulman \cite{Shulman11Blog}.
The consequences of the univalence axiom and the properties of higher inductive
types were further explored by the Univalent Foundations Program in \cite{hottbook}.
We refer to \cite{hottbook} for a further background to the subject of homotopy
type theory.

The present article is concerned with a new construction of the image of a map
$f:A\to X$ that we call the `join construction', in a univalent universe for Martin-L\"of type theory that is
assumed to be closed under homotopy pushouts. In particular, we do not assume
an operation of propositional truncation as described in \S 6.9 of \cite{hottbook}.
The question of constructing the propositional truncation in this setting
has also be addressed by Van Doorn \cite{VanDoorn} and Kraus \cite{Kraus}.
We show how the join construction can be modified to accommodate for the
case where $X$ is only assumed to be locally small, and we use this modified
version of the join construction to construct set-quotients, the Rezk-completion
of a pre-$1$-category, and $n$-truncations for every $n:\N$. 
Finally, we show that the sequence approximating the image of $f:A\to X$ increases in connectivity.

For the remainder of this introduction, we discuss the prerequisites of our
work and the methods that we deploy. 
Let us first state precisely the setting in which we work.
We assume a univalent universe $\UU$ in Martin-L\"of type 
theory, containing the usual types:
the empty type 
$\emptyt$, the unit type $\unit$, a natural numbers object $\N$, cartesian 
products $A\times B$, dependent function types $\prd{x:A}P(x)$, disjoint sums 
$A+B$, dependent pair types $\sm{x:A}P(x)$, and identity types $x=_A y$.
Recall that the univalence axiom implies function extensionality for $\Pi$-types
in the universe. 
We will assume function extensionality globally, i.e.~for $\Pi$-types of any size.
Moreover, we will assume that $\UU$ closed under \emph{graph quotients}. 
A model for this setting is the cubical set model of Bezem, Coquand and Huber
\cite{BezemCoquandHuber}, although it should be remarked that the graph
quotients have not yet been accounted for.

We recall graph quotients briefly here, but for more details we refer to
the forthcoming article \cite{RijkeSpitters}. 

\begin{defn}
A \define{(small) graph} $\Gamma$ is a pair $(\pts{\Gamma},\edg{\Gamma})$ consisting of
a type $\pts{\Gamma}:\UU$ of \define{vertices}, 
and a type-valued binary relation $\edg{\Gamma}:\pts{\Gamma}\to \pts{\Gamma}\to\UU$
of \define{edges}.
\end{defn}

\begin{defn}
For any small graph $\Gamma$, the \define{graph quotient} $\tfcolim(\Gamma)$ of
$\Gamma$ is a higher inductive type in $\UU$, with constructors
\begin{align*}
\pts{\mathsf{constr}} & : \pts{\Gamma} \to \tfcolim(\Gamma) \\
\edg{\mathsf{constr}} & : \prd{i,j:\pts{\Gamma}} \edg{\Gamma}(i,j)\to(\pts{\mathsf{constr}}(i)=\pts{\mathsf{constr}}(j))
\end{align*}
and satisfying the corresponding induction principle, which can be used to define sections
of type families of any size.
\end{defn}

\begin{rmk}
In the present work we shall only need to use the induction principle of graph
quotients to define sections of type families of locally small types, but we
postpone the definition of local smallness to \autoref{sec:modified-join}, 
where it becomes relevant.
\end{rmk}

Note that the graph quotient of a graph $\Gamma$ is just the homotopy 
coequalizer of the two projections $(\sm{i,j:\pts{\Gamma}}\edg{\Gamma}(i,j))\rightrightarrows\pts{\Gamma}$, 
and it is straightforward to obtain pushouts from these.
It therefore follows that whatever can be described using pushouts 
(e.g.~suspensions, the join, the smash product, and so on) can
also be obtained from these higher inductive types. 

Moreover, since we assume a natural numbers object $\N$ to be in $\UU$, 
we can also define sequential colimits from graph quotients. 
To see this, note that a type sequence
\begin{equation*}
\begin{tikzcd}
A_0 \arrow[r,"f_0"] & A_1 \arrow[r,"f_1"] & A_2 \arrow[r,"f_2"] & \cdots
\end{tikzcd}
\end{equation*}
is a pair $(A,f)$ consisting of
\begin{align*}
A & : \N\to\UU \\
f & : \prd{n:\N} A_n\to A_{n+1}.
\end{align*}
From $(A,f)$ we obtain the graph $\Gamma\jdeq (\Gamma_0,\Gamma_1)$ consisting of
\begin{align*}
\Gamma_0 & \defeq\sm{n:\N}A_n \\
\Gamma_1(\pairr{n,x},\pairr{m,y}) & \defeq \sm{p:n+1=m} \trans{p}{f_n(x)}=y. 
\end{align*}
From this description it follows that the only outgoing edge from
$\pairr{n,x}$ is the canonical edge from $\pairr{n,x}$ to $\pairr{n+1,f_n(x)}$.
The sequential colimit of $(A,f)$ is defined to be the graph quotient of this
graph $\Gamma$.

Because there is not yet a definitive, fully general formulation of higher 
inductive types, 
we restrict our attention to the special class of higher inductive
types that can be gotten from graph quotients. 
These are non-recursive, and therefore the induction
principle with the associated computation rule describes \emph{an equivalent way} of mapping
out of them into an arbitrary type. This is not the case for the current formulation
of some recursive higher inductive types. For example, the induction principle
of the propositional truncation only describes how to eliminate into another
mere proposition. In this sense, the graph quotients are a little better understood.

In \autoref{sec:join-maps}, we define the join of two maps with a common codomain
as the pushout of their pullback, as indicated in the following diagram
\begin{equation*}
\begin{tikzcd}
A\times_X B \arrow[r] \arrow[d] \arrow[dr, phantom, "{\ulcorner}", at end] & B \arrow[d] \arrow[ddr,bend left=15,"g"] \\
A \arrow[r] \arrow[drr,bend right=15,swap,"f"] & \join[X]{A}{B} \arrow[dr,densely dotted,swap,near start,"\join{f}{g}" xshift=1ex] \\
& & X.
\end{tikzcd}
\end{equation*}
Note that in the special case where $X\jdeq \unit$, the type $\join[\unit]{A}{B}$
is equivalent to the usual join operation $\join{A}{B}$
on types that is described in \cite{hottbook}.
In \autoref{defn:join-fiber} we will show that the join $\join{f}{g}$ of maps may be seen as the fiberwise
join. Analogously to the fiber product, we will write $\join[X]{A}{B}$ for the domain of $\join{f}{g}$ to signify
this specification.

To prove that the fibers of the join $\join{f}{g}$ are the join of the fibers
of $f$ and $g$, we will use the descent theorem from \cite{RijkeSpitters}. 
For pushouts, the descent theorem works as follows. 
Consider a span $\mathcal{S}_{f,g}$ given by
\begin{equation*}
\begin{tikzcd}
X & A \arrow[l,swap,"f"] \arrow[r,"g"] & Y,
\end{tikzcd}
\end{equation*}
and consider a map $h:Z\to (X+_A Y)$ into the pushout. By pulling back along $h$,
we obtain a cartesian map of spans, meaning that the evident squares are pullbacks, as indicated in the diagram
\begin{equation*}
\begin{tikzcd}
X \times_{(X+_A Y)} Z \arrow[d] & A\times_{(X+_A Y)} Z \arrow[l] \arrow[r] \arrow[d] \arrow[dr, phantom, "{\lrcorner}", at start] \arrow[dl, phantom, "{\llcorner}", at start] & Y\times_{(X+_A Y)} Z \arrow[d] \\
X & A \arrow[l,"f"] \arrow[r,swap,"g"] & Y.
\end{tikzcd}
\end{equation*}
We see that this describes a map 
$\mathsf{sliceToCart}:\UU/(X+_A Y)\to \mathsf{Cart}_{\mathcal{S}_{f,g}}$ 
from the type $\UU/(X+_A Y)$ of all maps into the pushout 
$X+_A Y$, to the type $\mathsf{Cart}_{\mathcal{S}_{f,g}}$ of all cartesian maps of spans into $\mathcal{S}_{f,g}$. 
The descent theorem, as we will use it, asserts that this map is an equivalence.
In fact, the descent theorem is itself equivalent to the univalence axiom 
\cite{RijkeSpitters}. 

The inverse of $\mathsf{sliceToCart}$
maps a cartesian map of spans to the map between their pushouts. Note that
any map of spans determines a map between their pushouts. The assumption of 
cartesianness ensures that if we start with a map of spans and apply
$\mathsf{sliceToCart}$ to the map between their pushouts, we get the original
span back. 

Starting with a map $h:Z\to X+_A Y$, we obtain a cube
\begin{equation*}
\begin{tikzcd}[column sep =3em]
& \makebox[3em]{$A\times_{(X+_A Y)} Z$} \arrow[rr,start anchor={[xshift=1.3em]},end anchor={[xshift=-1.3em]}] \arrow[dl] \arrow[dd] 
  & & \makebox[3em]{$Y\times_{(X+_A Y)} Z$} \arrow[dd] \arrow[dl] \\
\makebox[3em]{$Y\times_{(X+_A Y)} Z$} \arrow[rr,crossing over,start anchor={[xshift=1.3em]}] \arrow[dd] & & Z  \\
&  A \arrow[dl,swap,"f"] \arrow[rr,"g" near start] & & Y \arrow[dl,end anchor={[xshift=.5ex,yshift=.5ex]}] \\
X \arrow[rr,end anchor={[xshift=-1.3em]}] & & \makebox[.8em]{$X +_A Y$} \arrow[from=uu,crossing over,"h" near start]
\end{tikzcd}
\end{equation*}
in which the bottom square is the original pushout square, and the vertical squares are all pullback squares.
By the descent theorem it follows that the top square is again a pushout square. In this sense, the descent theorem provides a way of commuting pushouts with pullbacks. This observation is sometimes also called the `flattening lemma' for pushouts, and its formalization in homotopy type theory is due to Brunerie \cite{BruneriePhD}.

With the join of maps available, we consider in \autoref{sec:join-construction} 
join-powers $f^{\ast n}$, which are iterated joins of $f:A\to X$ with itself. 
The sequential colimit $f^{\ast\infty}$ of the join powers $f^{\ast n}$ turns
out to be equivalent to the image inclusion of $f$, as we will show in \autoref{thm:image}.

In the presence of
propositional truncation, every map in homotopy
type theory can be factored through a surjective map followed by an embedding,
in a unique way (see for instance Chapter 7 of \cite{hottbook}). 
The usual definition of surjectivity of a map $f:A\to X$ involves
propositional truncation
\begin{equation*}
\mathsf{isSurj}(f)\defeq\prd{x:X}\brck{\hfib{f}{x}},
\end{equation*}
and also the image itself is defined using propositional truncation,
as the type
\begin{equation*}
\sm{x:X}\brck{\hfib{f}{x}}.
\end{equation*}
However, in this article we do \emph{not} assume propositional truncation. 
Instead, we characterize the image of a map via its universal property with
respect to embeddings and in \autoref{thm:image} we will \emph{define} an embedding with the universal 
property of the image of $f$, as the infinite join power $f^{\ast\infty}$. 

In the special case where $X\jdeq \unit$,
this defines the propositional truncation of $A$. More precisely, we show that
the infinite join power $A^{\ast\infty}$ is the propositional truncation of $A$.

Let us state here the universal property of the image of $f$ with respect to embeddings.
Recall that an embedding is a map for which the homotopy fibers are mere propositions.
For any two maps $f:A\to X$ and $g:B\to X$ with a common codomain, we may consider
the type
\begin{equation*}
\mathrm{Hom}_X(f,g)\defeq \sm{h:A\to B} f\htpy g\circ h
\end{equation*}
of maps $h:A\to B$ such that the triangle
\begin{equation*}
\begin{tikzcd}
A \arrow[rr,"h"] \arrow[dr,swap,"f"] & & B \arrow[dl,"g"] \\
& X,
\end{tikzcd}
\end{equation*}
commutes. First, we observe that in the case where $g:B\to X$ is an embedding, it follows
that $\mathrm{Hom}_X(f,g)$ is a mere proposition. 
To see this, we apply the type theoretic principle of
choice, which is sometimes refered to as $\choice{\infty}$, to compute
\begin{align*}
\sm{h:A\to B} f\htpy g\circ h
& \eqvsym \prd{a:A}\sm{b:B} f(a)=g(b) \\
& \jdeq \prd{a:A}\fib{g}{f(a)}.
\end{align*}
This is a product of mere propositions, and mere propositions are closed under
dependent products. Thus, we see that any given $f:A\to X$ factors through
an embedding $g:B\to X$ in at most one way.

If we are given a second map $f':A'\to X$ with a commuting triangle
\begin{equation*}
\begin{tikzcd}
A \arrow[rr,"i"] \arrow[dr,swap,"f"] & & A' \arrow[dl,"{f'}"] \\
& X,
\end{tikzcd}
\end{equation*}
we can precompose factorizations of $f'$ through $g$ with $i$ to obtain
a factorization of $f$ through $g$. Explicitly, we have a map
\begin{equation}\label{eq:precomp_triangle}
\varphi^g_{i,I}:\mathrm{Hom}_X(f',g)\to\mathrm{Hom}_X(f,g)
\end{equation}
given by
\begin{equation*}
\varphi^g_{i,I}(h,H)\defeq \pairr{h\circ i, \lam{a}\ct{I(a)}{H(i(a))}},
\end{equation*}
where $I:f\htpy f'\circ i$ is the homotopy witnessing that $f$ factors through $f'$.

\begin{defn}\label{defn:universal}
Let $f:A\to X$ be a map. The \define{image} of $f$ is a quadruple $(\im(f),i_f,q_f,Q_f)$
consisting of a type $\im(f)$, an embedding $i_f:\im(f)\to X$, and a commuting triangle
\begin{equation*}
\begin{tikzcd}
A \arrow[rr,"q_f"] \arrow[dr,swap,"f"] & & \im(f) \arrow[dl,"{i_f}"] \\
& X
\end{tikzcd}
\end{equation*}
where $Q_f:f\htpy i_f\circ q_f$ witnesses that the triangle commutes, satisfying 
\define{the universal property of the image} that for
every embedding $g:B\to X$, the canonical map
\begin{equation*}
\varphi^g_{q_f,Q_f} : \mathrm{Hom}_X(i_f,g)\to\mathrm{Hom}_X(f,g)
\end{equation*}
defined in \autoref{eq:precomp_triangle} is an equivalence.
\end{defn}

Note that, since $\varphi^g_{e,E}$ is a map between mere propositions, to prove
that it is an equivalence it suffices to find a map in the converse direction.
As we shall see in the join construction, it is sometimes useful to consider
the universal property of the image of $f$ without requiring that $m$ is an
embedding. For example, in \autoref{lem:factor_join}
we will show that this universal property is stable under the operation $\join{f}{\blank}$
of joining by $f$. 

In the special case where $X\jdeq \unit$ an embedding $m:Y\to \unit$ satisfies
the universal property of the image of $f:A\to \unit$ precisely when for any
mere proposition $B$, the precomposition map $\blank\circ i : (Y\to B)\to(A\to B)$
is an equivalence. In this sense, the universal property of the image of a map
is a generalization of the universal property of the propositional truncation.
Indeed, the image of a map $f:A\to X$ can be seen as the propositional truncation
in the slice over $X$.

As a first application of the join construction, 
we show in \autoref{thm:idempotents} that the mere
propositions are precisely the `canonical' idempotents of the join operation. 
A canonical idempotent for the join operation is a type $A$ for which the
canonical map $\inl:A\to\join{A}{A}$ is an equivalence. Having such a canonical
equivalence allows one to show that $A= A^{\ast\infty}$, and since the type
$A^{\ast\infty}$ is shown to be the propositional truncation of $A$, it follows
that $A$ is a mere proposition. As a corollary, the embeddings are precisely
the canonical idempotents of the join operation on maps with a common codomain.
Since embeddings in homotopy type theory are subtypes,
the join of embeddings is just the union of subtypes. Thus, we see that
in particular the join generalizes the union $A\cup B$ of subtypes. 

The fact that the embeddings are precisely the canonical idempotents of
the join operation is somewhat reminiscent to the assertion in Theorem 4.1 of
\cite{AwodeyBauer}, where it is shown that the mere propositions are precisely
the cartesian idempotents.

In \autoref{sec:modified-join} we observe that when $A$ is small, and when $X$
is `locally small' in the sense that its identity types are small, then the
join construction can be modified slightly so that we are still able to
define an embedding (of which the domain is in $\UU$) with the universal
property of the image inclusion of $f$. 
Thus, we show in \autoref{thm:modified-join} that the image of a map $f:A\to X$
from a small type $A$ into a locally small type $X$ can be constructed
under the assumption that $\UU$ is a univalent universe with a natural 
numbers object, and closed under graph quotients. Moreover, this image is
again a small type. This is the main result of the present article.
 
Basic examples of types that satisfy the condition of local smallness include
any type in $\UU$, the univalent universe itself, mere propositions of any size,
and the exponent $A\to X$ for any $A:\UU$ and any locally small type $X$.
In particular, the image of any dependent type $P:A\to\UU$ is a type in $\UU$.
This image is sometimes called the \define{univalent completion} of $P$.

Using this modified version of the join construction we can construct
set-quotients following Voevodsky's large construction of set-quotients as
the image of an equivalence relation, and we can give an alternative construction
of the Rezk completion of a precategory as the image of the Yoneda embedding.

It is also worth observing that the constructions of the propositional truncation,
set-quotients and the Rezk completion bear great similarity: they \emph{all} take the
image of the Yoneda embedding. For instance, if $R:A\to A\to\prop$ is a 
$\prop$-valued equivalence relation on a type $A$, then $R$ may be considered
as the Yoneda embedding from the pre-$0$-category $A$ (with morphisms given by
$R$) into the $\prop$-valued presheaves.  
In other words, propositional truncation, set-quotients and the Rezk completion restricted to
pregroupoids fit in a hierarchy of increasing homotopical complexity, analogous
to the hierarchy of h-levels.

\medskip
\begin{tabular}{cll}
\emph{level} & \emph{equivalence structure} & \emph{quotient operation} \\
\hline \\
$-1$ & trivial relation & propositional truncation \\
$0$ & $\prop$-valued equivalence relation & set-quotient \\
$1$ & pre-$1$-groupoid structure & Rezk completion \\
$\vdots$ & \qquad$\vdots$ & \qquad$\vdots$ \\
$\infty$ & `pre-$\infty$-groupoid structure' & $\infty$-quotient
\end{tabular}

\medskip
\noindent%
Note that the above table suggests that a trivial relation on a type $A$, which
is a relation $R:A\to A \to\UU$ such that $R(a,b)$ is contractible for each
$a,b:A$, can be regarded as a pre-$(-1)$-groupoid structure on $A$, and that
a $\prop$-valued equivalence relation on $A$ can be regarded as 
a pre-$0$-groupoid structure on $A$.

We do not yet have a precise, satisfactory type theoretic formulation of the notion
of `pre-$\infty$-groupoid structure' on a type. Nevertheless, there is a non-trivial
class of examples that should fit in the $\infty$-th level proper, namely the
relation $x,y\mapsto \modal(x=y)$, for any modality $\modal$ on $\UU$, of
which the quotient operation is the modality $\modal^+$ of $\modal$-separated objects,
where a type is said to be $\modal$-separated if its identity types are $\modal$-modal.
The construction of the modality of $\modal$-separated objects will be given
in \cite{RijkeShulmanSpitters}. However, we will do a specific instance of that
construction here, namely the construction of the $(n+1)$-truncation from
the $n$-truncation. In \autoref{thm:truncation} we will use this to show that
for any $n\geq -2$, the $n$-truncation is a definable operation on a univalent
universe that is closed under graph quotients.

In the final section we show that the join of maps increases connectivity, as one would expect.
If a map $f:A\to X$ is $M$-connected and $g$ is $N$-connected for two types
$M$ and $N$, then the join $\join{f}{g}$ will be $(\join{M}{N})$-connected. A
special case of this result is that if $f$ is $m$-connected and $g$ is $n$-connected
for $m,n:\N$, then $\join{f}{g}$ is $(m+n+2)$-connected. A result in similar
spirit is the join extension theorem, \autoref{thm:join-extension}. This theorem asserts that if $f:X\to Y$
is an $M$-connected map, and $P:Y\to\UU$ is an $(\join{M}{N})$-local family of
types (see \autoref{defn:local}), then precomposition
by $f$ is an $N$-local map of type $(\prd{y:Y}P(y))\to(\prd{x:X}P(f(x))$.
We will use the join extension theorem to prove the universal property of $(n+1)$-truncation.
The join extension and connectivity theorems could be viewed as a first set of results about the interaction
of join with modalities, see \cite{RijkeShulmanSpitters}.

It should be noted, although it is not the subject of this article, that the
join construction also gives rise to the Milnor-construction of the principal
bundle over a topological group \cite{Milnor}. In the setting of homotopy type theory, we take
as $\infty$-groups the pointed connected types $\mathrm{pt}:\unit\to\mathbf{B}G$.
Then the Milnor-construction considers the iterated join-powers of $\mathrm{pt}$
with itself. In the special case where $\mathbf{B}G\jdeq K(\Z/2\Z,1)$ we obtain
the real projective spaces from the Milnor construction, and in the case where
$\mathbf{B}G\jdeq K(\Z,2)$ we obtain the complex projective spaces. Research
in this direction is joint work with Buchholtz, see \cite{classifying-types,realprojective}.

\section{The join of maps}\label{sec:join-maps}

\begin{defn}
Let $f:A\to X$ and $g:B\to X$ be maps into $X$. We define the type $\join[X]{A}{B}$ and the \define{join}\footnote{\emph{Warning}: By $\join{f}{g}$ we do \emph{not} mean the functorial action of the
join, applied to $(f,g)$.} $\join{f}{g}:\join[X]{A}{B}\to X$ of
$f$ and $g$, as indicated in the following diagram:
\begin{equation*}
\begin{tikzcd}
A\times_X B \arrow[r,"\pi_2"] \arrow[d,swap,"\pi_1"] \arrow[dr, phantom, "{\ulcorner}", at end] & B \arrow[d,"\inr"] \arrow[ddr,bend left=15,"g"] \\
A \arrow[r,swap,"\inl"] \arrow[drr,bend right=15,swap,"f"] & \join[X]{A}{B} \arrow[dr,densely dotted,swap,near start,"\join{f}{g}" xshift=1ex] \\
& & X.
\end{tikzcd}
\end{equation*}
\end{defn}

\begin{thm}\label{defn:join-fiber}
Let $f:A\to X$ and $g:B\to X$ be maps into $X$, and let $x:X$. Then there is
an equivalence
\begin{equation*}
\eqv{\hfib{\join{f}{g}}{x}}{\join{\hfib{f}{x}}{\hfib{g}{x}}}.
\end{equation*}
\end{thm}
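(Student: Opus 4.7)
The plan is to apply the descent theorem (the flattening lemma for pushouts) recalled in the introduction, using the pullback along $x:\unit\to X$ composed with $\join{f}{g}$. By construction, $\join[X]{A}{B}$ is the pushout of the span $A \xleftarrow{\pi_1} A\times_X B \xrightarrow{\pi_2} B$, and $\join{f}{g}$ is the map into $X$ induced by $f$ and $g$ (which agree on $A\times_X B$ by definition of the pullback). I would form the cube whose bottom face is this pushout square over $X$ and whose vertical faces are pullback squares obtained by pulling back each vertex along the composite to $X$ via the point $x$. The top face is then the fiber of $\join{f}{g}$ at $x$, presented as the pushout of the pulled-back span. By descent, this top square is again a pushout, so it suffices to identify the three pulled-back vertices.

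First, by the standard definition of the fiber we have $A\times_X \unit \eqvsym \hfib{f}{x}$ and $B\times_X \unit \eqvsym \hfib{g}{x}$ for the two outer vertices of the span. For the apex $A\times_X B$, the composite to $X$ is $f\circ\pi_1$ (equivalently $g\circ\pi_2$), and its fiber over $x$ unfolds to
\begin{equation*}
\sm{a:A}\sm{b:B}\sm{p:f(a)=g(b)}(f(a)=x).
\end{equation*}
A straightforward path-algebra rearrangement, sending $(a,b,p,q)$ to the pair $\bigl((a,q),(b,\ct{p^{-1}}{q})\bigr)$, produces an equivalence with $\hfib{f}{x}\times\hfib{g}{x}$, and under this equivalence the maps $\pi_1,\pi_2$ go to the product projections. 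This is the one place where one must be careful with coherences, but it is a routine calculation once the direction of the equivalence is fixed.

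Combining these identifications, the pulled-back span becomes
\begin{equation*}
\hfib{f}{x} \xleftarrow{\pi_1} \hfib{f}{x}\times\hfib{g}{x} \xrightarrow{\pi_2} \hfib{g}{x},
\end{equation*}
whose pushout is by definition $\join{\hfib{f}{x}}{\hfib{g}{x}}$. Thus the top face of the cube is exactly $\join{\hfib{f}{x}}{\hfib{g}{x}}$, and since the top face is also $\hfib{\join{f}{g}}{x}$, the descent theorem yields the desired equivalence.

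The main obstacle will be bookkeeping: verifying that the three cartesian squares forming the vertical faces of the cube really are cartesian with the canonical projections, and that the identifications of the apex $A\times_X B$ at $x$ with $\hfib{f}{x}\times\hfib{g}{x}$ are compatible with the two side projections in the pushout diagram. Apart from this coherence check, the proof is essentially an application of descent.
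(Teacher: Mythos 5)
Your proposal is correct and follows essentially the same route as the paper: pull back the defining pushout cube along $x:\unit\to X$, invoke descent/flattening to see the fiber as the pushout of the pulled-back span, and then identify the apex $\sm{a:A}{b:B}(f(a)=g(b))\times(f(a)=x)$ with $\hfib{f}{x}\times\hfib{g}{x}$ by the same path-algebra equivalence (the paper phrases it as the fiberwise equivalence $\eqv{(f(a)=x)}{(f(a)=g(b))}$ given by post-composition with $\opp{p}$). The coherence bookkeeping you flag is exactly the step the paper also isolates, so there is no gap.
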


\begin{proof}[Construction]
Recall that the fiber of the map $\join{f}{g}$ at $x:X$ can be obtained as
the pullback
\begin{equation*}
\begin{tikzcd}
\fib{\join{f}{g}}{x} \arrow[r] \arrow[d] & \unit \arrow[d,"x"] \\
\join[X]{A}{B} \arrow[r,swap,"\join{f}{g}"] & X.
\end{tikzcd}
\end{equation*}
By pulling back along the map $\fib{\join{f}{g}}{x}\to \join[X]{A}{B}$ we
obtain the following cube
\begin{small}
\begin{equation*}
\begin{tikzcd}[column sep=3em]
& \makebox[5em]{$\sm{a:A}{b:B}(f(a)=g(b))\times (g(b)=x)$} \arrow[rr,start anchor={[xshift=6.3em]}] \arrow[dl,densely dotted] \arrow[dd] 
  & & \hfib{g}{x} \arrow[dd] \arrow[dl] \\
\hfib{f}{x} \arrow[rr,crossing over] \arrow[dd] & & \hfib{\join{f}{g}}{x} \\
  & \sm{a:A}{b:B}f(a)=g(b) \arrow[dl] \arrow[rr] & & B \arrow[dl] \\
A \arrow[rr] & & \join[X]{A}{B} \arrow[from=uu,crossing over]
\end{tikzcd}
\end{equation*}
\end{small}%
in which he bottom square is the defining pushout of $\join[X]{A}{B}$. 
The front, right and back squares are easily seen to be pullback squares, by the pasting lemma of pullbacks. Hence the dotted map, being the unique map such that the top and left squares commute, makes the left square a pullback. Hence the top square is a pushout by the descent theorem or by the flattening lemma for pushouts.

However, to conclude the join formula we need to show that the square
\begin{equation*}
\begin{tikzcd}
\big(\hfib{f}{x}\big)\times\big(\hfib{g}{x}\big) \arrow[r,"\pi_2"] \arrow[d,swap,"\pi_1"] & \hfib{g}{x} \arrow[d] \\
\hfib{f}{x} \arrow[r] & \hfib{\join{f}{g}}{x}
\end{tikzcd}
\end{equation*}
is a pushout. This is be shown by giving a fiberwise equivalence of type
\begin{equation*}
\prd{a:A}{b:B}{p:g(b)=x} \eqv{(f(a)=x)}{(f(a)=g(b))}.
\end{equation*}
We then take this fiberwise equivalence to be post-composition with $\opp{p}$. 
\end{proof}

\begin{rmk}
The join operation on maps with a common codomain is associative up to homotopy (this was formalized by Brunerie, see Proposition 1.8.6 of \cite{BruneriePhD}), and it is a commutative operation on the generalized elements of a type $X$. Furthermore, the unique map of type $\emptyt\to X$ is a unit for the join operation.
\end{rmk}

 In the following lemma we will show that the join of embeddings is again an embedding. This is a generalization of the statement that if $P$ and
$Q$ are mere propositions, then $\join{P}{Q}$ is a mere proposition, and actually the more general statement reduces to this special case. Therefore, the embeddings form a `submonoid' of the `monoid' of generalized elements. The join $\join{P}{Q}$ on embeddings $P$ and $Q$ is the same as the union $P\cup Q$. In \autoref{thm:idempotents} below, we show that the mere propositions are precisely the idempotents for the join operation.

\begin{lem}
Suppose $f$ and $g$ are embeddings. Then $\join{f}{g}$ is also an embedding.
\end{lem}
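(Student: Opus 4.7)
The plan is to reduce the claim to the special case of mere propositions by means of \autoref{defn:join-fiber}. Since $f$ and $g$ are embeddings, each fiber $\hfib{f}{x}$ and $\hfib{g}{x}$ is a mere proposition, and \autoref{defn:join-fiber} supplies an equivalence $\hfib{\join{f}{g}}{x}\eqvsym\join{\hfib{f}{x}}{\hfib{g}{x}}$. It therefore suffices to establish the sublemma that the join $\join{P}{Q}$ of two mere propositions $P$ and $Q$ is again a mere proposition.

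For the sublemma, a convenient route is to produce a map $\join{P}{Q}\to\mathsf{isContr}(\join{P}{Q})$. This is enough, because given $x,y:\join{P}{Q}$, applying such a map to $x$ yields contractibility of $\join{P}{Q}$, whence a path $x=y$. Since $\mathsf{isContr}$ is itself a mere proposition, the pushout recursion principle for $\join{P}{Q}$ reduces this construction to producing two maps $P\to\mathsf{isContr}(\join{P}{Q})$ and $Q\to\mathsf{isContr}(\join{P}{Q})$, the compatibility on $P\times Q$ being automatic.

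Given $p:P$, I would take $\inl(p)$ as the center of contraction and build the contraction $\prd{y:\join{P}{Q}}\inl(p)=y$ by pushout induction on $y$: at $\inl(p')$ apply $\mathsf{ap}_{\inl}$ to the path $p=p'$ obtained from propositionality of $P$, and at $\inr(q)$ take the generating path $\inl(p)=\inr(q)$ arising from the element $(p,q):P\times Q$ of the pushout diagram. An analogous construction, starting from a point of $Q$, handles the symmetric case.

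The main obstacle is the coherence obligation of this pushout induction: for each $(p',q'):P\times Q$ one must fill a square between the two point-level definitions along the generating path from $\inl(p')$ to $\inr(q')$, after transporting along that path in the path type $\inl(p)=(\blank)$. Such a $2$-cell is forced by the propositionality of $P$ and $Q$, which makes the relevant paths in the pushout data unique up to higher homotopy, and no further ascending coherences arise because we ultimately only produce an element of the mere proposition $\mathsf{isContr}(\join{P}{Q})$.
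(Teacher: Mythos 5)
Your proposal is correct, and it follows the paper's reductions exactly up to the last step: both arguments use \autoref{defn:join-fiber} to reduce to the claim that $\join{P}{Q}$ is a mere proposition when $P$ and $Q$ are, then reduce further to $\join{P}{Q}\to\iscontr(\join{P}{Q})$ and, since $\iscontr$ is a mere proposition, to the two maps $P\to\iscontr(\join{P}{Q})$ and $Q\to\iscontr(\join{P}{Q})$. Where you diverge is in producing $\iscontr(\join{P}{Q})$ from $p:P$. The paper avoids all pushout induction here: since $P$ is contractible, the projection $P\times Q\to Q$ is an equivalence, hence its cobase change $\inl:P\to\join{P}{Q}$ is an equivalence, and contractibility of $\join{P}{Q}$ follows at once. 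You instead build the contraction $\prd{y:\join{P}{Q}}\inl(p)=y$ by hand via pushout induction, which is a legitimate and more elementary route, but it obliges you to fill the square you describe. One caution about your justification of that square: it is not enough to say that propositionality of $P$ and $Q$ makes paths in the pushout unique up to higher homotopy --- that would presuppose something like the conclusion. The obligation is the concrete identity $\ct{\apfunc{\inl}(\alpha_{p,p'})}{\glue(p',q')}=\glue(p,q')$, and it is discharged by the naturality of $\glue$ applied to the path $(p,q')=(p',q')$ in $P\times Q$ whose first component is $\alpha_{p,p'}$ and whose second component is $\refl{q'}$; here propositionality of $P$ is used only to produce $\alpha_{p,p'}$. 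With that correction your argument goes through; the paper's cobase-change observation simply buys you freedom from this coherence bookkeeping.
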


\begin{proof}
By \autoref{defn:join-fiber}, it suffices to show that if $P$ and $Q$ are mere
propositions, then $\join{P}{Q}$ is also a mere proposition. It is equivalent
to show that $\join{P}{Q}\to\iscontr(\join{P}{Q})$. Recall that $\iscontr(\blank)$
is a mere proposition. So it suffices to show that
\begin{align*}
& P \to \iscontr(\join{P}{Q}) \\
& Q \to \iscontr(\join{P}{Q}).
\end{align*}
By symmetry, it suffices to show only $P\to\iscontr(\join{P}{Q})$. Let $p:P$. 
Then $P$ is contractible, and therefore the projection $P\times Q\to Q$ is an
equivalence. Hence it follows that $\inl:P\to \join{P}{Q}$ is an equivalence,
which shows that $\join{P}{Q}$ is contractible.
\end{proof}

\section{The join construction}\label{sec:join-construction}

The join construction gives, for any $f:A\to X$, an approximation of the image
$\im(f)$ by a type sequence. 
Before we give the join construction, we will show that the universal
property of the image of $f$, is closed under
the operation $\join{f}{\blank}$ of joining by $f$, and in \autoref{lem:factor_seq}
we will also show that this property is closed under sequential colimits.

Let $f:A\to X$ and $f':A'\to X$ be maps, and consider a commuting triangle
\begin{equation*}
\begin{tikzcd}
A \arrow[rr,"i"] \arrow[dr,swap,"f"] & & A' \arrow[dl,"{f'}"] \\
& X
\end{tikzcd}
\end{equation*}
with $I:f\htpy f'\circ i$. Then we also obtain a commuting triangle
\begin{equation*}
\begin{tikzcd}
A \arrow[rr,"\inl"] \arrow[dr,swap,"f"] & & \join[X]{A}{A'} \arrow[dl,"{\join{f}{f'}}"] \\
& X
\end{tikzcd}
\end{equation*}
We will write $C_l$ for the homotopy $f\htpy \join{f}{f'}\circ\inl$ witnessing that the
above triangle commutes.

\begin{lem}\label{lem:factor_join}
For every embedding $g:B\to X$, if the map
\begin{equation*}
\varphi^g_{i,I}: \mathrm{Hom}_X(f',g)\to\mathrm{Hom}_X(f,g)
\end{equation*}
defined in \autoref{eq:precomp_triangle} is an equivalence, then so is 
\begin{equation*}
\varphi^g_{\inl,C_l}: \mathrm{Hom}_X(\join{f}{f'},g)\to\mathrm{Hom}_X(f,g).
\end{equation*}
\end{lem}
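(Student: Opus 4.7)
The plan is to exploit the fact that since $g$ is an embedding, both $\mathrm{Hom}_X(\join{f}{f'},g)$ and $\mathrm{Hom}_X(f,g)$ are mere propositions (by the computation just before \autoref{defn:universal}, applied to $\join{f}{f'}$ and to $f$ respectively). Hence $\varphi^g_{\inl,C_l}$ is automatically a map between mere propositions, and to prove it is an equivalence it suffices to construct a map in the converse direction.

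So, given $(h,H):\mathrm{Hom}_X(f,g)$, I will construct a factorization of $\join{f}{f'}$ through $g$. First, applying the inverse of the assumed equivalence $\varphi^g_{i,I}$ to $(h,H)$, I obtain a factorization $(h',H'):\mathrm{Hom}_X(f',g)$ of $f'$ through $g$, i.e.~a map $h':A'\to B$ together with a homotopy $H':f'\htpy g\circ h'$.

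Next, I use the recursion principle of the pushout $\join[X]{A}{A'}$ to define a map $k:\join[X]{A}{A'}\to B$. The cocone data consist of the two maps $h:A\to B$ and $h':A'\to B$, together with, for each $(a,a',p):A\times_X A'$ (where $p:f(a)=f'(a')$), a path $h(a)=h'(a')$ in $B$. Since $g$ is an embedding, $\mathsf{ap}_g:(h(a)=h'(a'))\to(g(h(a))=g(h'(a')))$ is an equivalence, so it suffices to specify a path in $X$; I take $\ct{H(a)^{-1}}{\ct{p}{H'(a')}}$ and apply the inverse of $\mathsf{ap}_g$ to obtain the required path in $B$.

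Finally, I verify that the triangle commutes, i.e.~build a homotopy $\join{f}{f'}\htpy g\circ k$. By the induction principle of the pushout, this reduces to a homotopy $f\htpy g\circ h$ (given by $H$), a homotopy $f'\htpy g\circ h'$ (given by $H'$), and a coherence cell over $A\times_X A'$; the latter holds by construction, since the path $h(a)=h'(a')$ was defined as the $\mathsf{ap}_g$-lift of $\ct{H(a)^{-1}}{\ct{p}{H'(a')}}$. The main bookkeeping is exactly this pushout coherence, but because $g$ is an embedding the type of witnesses at each point of $A\times_X A'$ is a mere proposition, so all higher coherences collapse and the argument concludes without further work.
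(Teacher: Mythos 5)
Your proof is correct and takes essentially the same approach as the paper's: reduce to constructing a converse map between mere propositions, invert the assumed equivalence $\varphi^g_{i,I}$ to obtain a factorization of $f'$ through $g$, and glue the two factorizations via the pushout universal property of $\join[X]{A}{A'}$, using that $g$ is an embedding to produce the required paths $h(a)=h'(a')$ over the pullback. One minor quibble: the final coherence cell lives in a $2$-path type of $X$, which is not a mere proposition merely because $g$ is an embedding, but your primary justification---that the coherence holds by construction of the $\mathsf{ap}_g$-lift---is the correct one and matches the level of detail of the paper's appeal to uniqueness.
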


\begin{proof}
Suppose that $g:B\to X$ is an embedding, and that $\varphi^g_{i,I}$
Since $\varphi^g_{\inl,C_l}$ is a map between mere propositions, it
suffices to define a map in the converse direction.
Some essential ingredients of the proof are illustrated in \autoref{fig}.
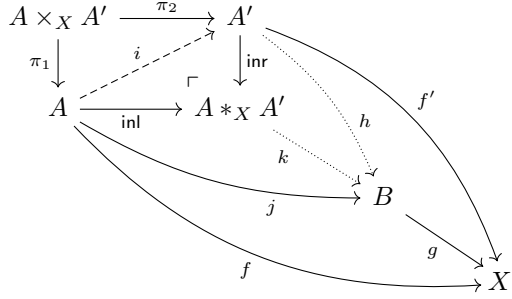
\begin{figure}
\begin{tikzcd}
A\times_X A' \arrow[r,"\pi_2"] \arrow[d,swap,"\pi_1"] \arrow[dr, phantom, "{\ulcorner}", at end] & A' \arrow[d,"\inr"] \arrow[ddr,bend left=15,densely dotted,near end,"h"] \arrow[dddrr,end anchor={[xshift=.2em]},bend left=25,"{f'}"] \\
A \arrow[ur,dashed,"i"] \arrow[r,swap,"\inl"] \arrow[drr,bend right=15,swap,near end,"j"] \arrow[ddrrr,bend right=25,end anchor={[xshift=.5em]},swap,"f"] & \join[X]{A}{A'} \arrow[dr,densely dotted,swap,near start,"k"] \\
& & B \arrow[dr,end anchor={[xshift=.5em,yshift=-.2em]},swap,"g"] \\
& & & \makebox[1.5em]{\centering $X$}
\end{tikzcd}
\caption{Diagram for the proof of \autoref{lem:factor_join}\label{fig}}
\end{figure}

Let $j:A\to B$ and $J:f\htpy g\circ j$. By our assumption on $f'$, we find
$h:A'\to B$ and $H:f'\htpy g\circ h$. Now it follows that the square
\begin{equation*}
\begin{tikzcd}
A\times_X A' \arrow[r] \arrow[d] & A' \arrow[d,"h"] \\
A \arrow[r,swap,"j"] & B
\end{tikzcd}
\end{equation*}
commutes, because that is equivalent (by the assumption that $g$ is an embedding)
to the commutativity of the square
\begin{equation*}
\begin{tikzcd}
A\times_X A' \arrow[r] \arrow[d] & A' \arrow[d,"{f'}"] \\
A \arrow[r,swap,"f"] & X.
\end{tikzcd}
\end{equation*}
Thus, we get from the universal property of $\join[X]{A}{A'}$ a map
$k:\join[X]{A}{A'}$ and homotopies $j\htpy k\circ\inl$ and $h\htpy k\circ\inr$.
It follows that $f\htpy (g\circ k)\circ \inl$ and $f'\htpy (g\circ k)\circ\inr$.
Hence by uniqueness we obtain a homotopy $K:\join{f}{f'}\htpy k\circ g$.
\end{proof}

\begin{lem}\label{lem:factor_seq}
Let $f:A\to X$ be a map, and consider a sequence $(A_n)_{n:\N}$ together with
a cone with vertex $A$ and a cocone with vertex $X$, as indicated in the
diagram 
\begin{equation*}
\begin{tikzcd}[column sep=large]
& A \arrow[dl,swap,"i_0"] \arrow[d,swap,"i_1"] \arrow[dr,"i_2" near end] \arrow[drr,bend left=5,"i_3"] \\
A_0 \arrow[dr,swap,near start,"f_0"] \arrow[r,"a_{0}"] & A_1 \arrow[d,swap,near start,"f_1"] \arrow[r,"a_1"] & A_2 \arrow[dl,swap,"f_2" xshift=.5em] \arrow[r,"a_2" near start] & \cdots \arrow[dll,bend left=5,"f_3"] \\
& X
\end{tikzcd}
\end{equation*}
with colimit
\begin{equation*}
\begin{tikzcd}
A \arrow[r,"i_\infty"] & A_\infty \arrow[r,"f_\infty"] & X.
\end{tikzcd}
\end{equation*}
Let $g:B\to X$ be an embedding. If $\varphi^g_{i_n,I_n}$ is an equivalence for each $n:\N$, then so is $\varphi^g_{i_\infty,I_\infty}$. 
\end{lem}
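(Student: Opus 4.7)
The overall strategy is to exploit that $g$ is an embedding, which forces the Hom-types involved to be mere propositions, so that the sequential colimit's universal property can be invoked without having to verify higher coherences.

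First, I would observe that since $g:B\to X$ is an embedding, each of the types $\mathrm{Hom}_X(f,g)$, $\mathrm{Hom}_X(f_n,g)$ for every $n:\N$, and $\mathrm{Hom}_X(f_\infty,g)$ is a mere proposition, by the computation $\mathrm{Hom}_X(\blank,g)\eqvsym\prd{a}\fib{g}{\blank(a)}$ given in the introduction. In particular, $\varphi^g_{i_\infty,I_\infty}$ is a map between mere propositions, so it suffices to construct a map
\[
\mathrm{Hom}_X(f,g)\to\mathrm{Hom}_X(f_\infty,g)
\]
in the converse direction.

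So let $(j,J):\mathrm{Hom}_X(f,g)$. For each $n:\N$, the hypothesis that $\varphi^g_{i_n,I_n}$ is an equivalence allows us to set
\[
(h_n,H_n)\defeq (\varphi^g_{i_n,I_n})^{-1}(j,J):\mathrm{Hom}_X(f_n,g).
\]
To promote the family $(h_n)_{n:\N}$ to a cocone on the sequence $(A_n,a_n)$ with vertex $B$, observe that $g\circ (h_{n+1}\circ a_n)\htpy f_{n+1}\circ a_n \htpy f_n$, so $(h_{n+1}\circ a_n,\_)$ is another element of $\mathrm{Hom}_X(f_n,g)$; since this type is a mere proposition, we get a canonical homotopy $\alpha_n : h_{n+1}\circ a_n \htpy h_n$. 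The resulting cocone data $(h_n,\alpha_n)$ on the type sequence factors uniquely through the sequential colimit $A_\infty$ to yield a map $k:A_\infty\to B$.

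To finish, I need a homotopy $K:f_\infty\htpy g\circ k$. Both $f_\infty$ and $g\circ k$ are maps $A_\infty\to X$, and by construction each restricts (via the colimit injections) to the family $(f_n)_{n:\N}$ up to the evident compatibilities. By the uniqueness part of the universal property of the sequential colimit applied to maps into $X$, we obtain $K$. This produces the desired element $(k,K):\mathrm{Hom}_X(f_\infty,g)$, which defines the required converse map and completes the proof. The main obstacle I anticipate is bookkeeping the cocone homotopies $\alpha_n$ so that they genuinely assemble into cocone data acceptable to the universal property of $A_\infty$; however, because $g$ is an embedding, all potentially problematic coherences live in types that are already mere propositions, so every such coherence is obtained for free.
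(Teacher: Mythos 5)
Your proposal is correct and follows essentially the same route as the paper: since $g$ is an embedding the types $\mathrm{Hom}_X(\blank,g)$ are mere propositions, so it suffices to produce a converse map, which you do by assembling the factorizations $(h_n,H_n)$ into a cocone on $(A_n)_{n:\N}$ with vertex $B$ and passing to the sequential colimit. Your extra observation that the cocone coherences $\alpha_n$ and the compatibility of the $H_n$ come for free from the propositionality of $\mathrm{Hom}_X(f_n,g)$ is exactly the justification the paper leaves implicit.
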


\begin{proof}
To prove that $\varphi^g_{i_\infty,I_\infty}$ is an equivalence, it suffices to
find a map in the converse direction. Let $j:A\to B$ be a map, and let
$J:f\htpy g\circ j$ be a homotopy. Since each $\varphi^g_{i_\infty,I_\infty}$
is an equivalence, we find for each $n:\N$ a map
$h_n:A_n\to B$ and a homotopy $H_n: f_n\htpy g\circ h_n$. Then it follows that
the maps $(h_n)_{n:\N}$ form a cocone on $(A_n)_{n:\N}$ with vertex $B$, so
we obtain a map $h_\infty:A_\infty\to B$. It also follows that the homotopies
$(H_n)_{n:\N}$ form a compatible family of homotopies, so that we obtain
$H_\infty:f_\infty\htpy g\circ h_\infty$.
\end{proof}

\begin{thm}\label{thm:image}
In Martin-L\"of type theory with a univalent universe $\UU$ that is closed under
graph quotients we can define for every $f:A\to X$ with $A,X:\UU$ the image
of $f$ with $\im(f):\UU$.
\end{thm}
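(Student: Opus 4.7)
The plan is to realize $\im(f)$ as the infinite join power $f^{\ast\infty}$, constructed as the sequential colimit of the finite join powers. First, define by recursion on $n : \N$ a sequence of maps into $X$: let $f^{\ast 0} : \emptyt \to X$ be the unique map, and set $f^{\ast (n+1)} \defeq \join{f}{f^{\ast n}}$, with domain $A^{\ast (n+1)}$. The right inclusions $\inr$ assemble these domains into a type sequence, and the $f^{\ast n}$ form a cocone with vertex $X$. Because $\UU$ contains $\N$ and is closed under graph quotients, this sequence has a sequential colimit $A_\infty : \UU$, and its universal property produces $f_\infty : A_\infty \to X$. In the same way the left inclusions $\inl : A \to A^{\ast (n+1)}$ assemble into a compatible cone from $A$, producing a map $i_\infty : A \to A_\infty$ together with a homotopy $f \htpy f_\infty \circ i_\infty$.

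Next, I would establish the universal property from \autoref{defn:universal}. By induction on $n$, the map $\varphi^g_{i_n, I_n}$ of \autoref{eq:precomp_triangle} is an equivalence for every embedding $g : B \to X$: the base case $n = 0$ holds because $\emptyt$ factors uniquely through any map, and the inductive step is exactly \autoref{lem:factor_join} applied with $f' \defeq f^{\ast n}$. Applying \autoref{lem:factor_seq} to the entire sequence then yields that $\varphi^g_{i_\infty, I_\infty}$ is an equivalence, which is the required universal property.

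The main obstacle is showing that $f_\infty$ is itself an embedding. For this I would use \autoref{defn:join-fiber} iteratively, together with the commutation of sequential colimits with pullbacks, to identify the fiber $\hfib{f_\infty}{x}$ at any $x : X$ with the sequential colimit of the iterated self-joins of $\hfib{f}{x}$. This reduces the question to showing that for every type $T : \UU$, the infinite join power $T^{\ast\infty}$ is a mere proposition. The argument I have in mind is to produce an equivalence $\eqv{T^{\ast\infty}}{\join{T^{\ast\infty}}{T^{\ast\infty}}}$ induced by $\inl$, by a cofinality and reindexing argument on the defining sequence, and then to adapt the strategy of the lemma preceding \autoref{sec:join-construction}: the left-inclusion equivalence means that $T^{\ast\infty}$ absorbs itself under joining, so that any point of $T^{\ast\infty}$ upgrades it to a contractible type, whence $T^{\ast\infty}$ is a mere proposition.
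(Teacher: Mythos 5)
Your construction and your treatment of the universal property coincide with the paper's: the same sequence of join powers, the same induction using \autoref{lem:factor_join}, and the same passage to the colimit via \autoref{lem:factor_seq}. One small repair is needed there: the base case cannot be $n=0$, since there is no cone map $A\to\emptyt$, and $\mathrm{Hom}_X(f^{\ast 0},g)$ is contractible while $\mathrm{Hom}_X(f,g)$ need not be. Start instead at $n=1$, where $\eqv{\join[X]{A}{\emptyt}}{A}$ makes $\varphi^g_{i_1,I_1}$ trivially an equivalence, and apply \autoref{lem:factor_seq} to the cofinal subsequence beginning at index $1$.

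Where you genuinely diverge is in showing that $f^{\ast\infty}$ is an embedding. The paper exploits the universal property it has just established: since the projection $\sm{x:X}\iscontr(\hfib{f^{\ast\infty}}{x})\to X$ is an embedding, producing $\hfib{f^{\ast\infty}}{x}\to\iscontr(\hfib{f^{\ast\infty}}{x})$ reduces to producing $\hfib{f}{x}\to\iscontr(\hfib{f^{\ast\infty}}{x})$; and once the fiber $T\defeq\hfib{f}{f(a)}$ is \emph{pointed}, each map $\inr:T^{\ast n}\to\join{T}{T^{\ast n}}$ is homotopic via $\glue$ to a constant map, so the colimit of the fibers is contractible. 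You instead prove the unconditional statement that $T^{\ast\infty}$ is a mere proposition for every $T$, via the idempotency $\eqv{T^{\ast\infty}}{\join{T^{\ast\infty}}{T^{\ast\infty}}}$. Your closing step is sound: given $w_0:T^{\ast\infty}$, the paths $\glue(w_0,\blank)$ make $\inr$ homotopic to the constant map at $\inl(w_0)$, and $\inr$ is an equivalence whenever $\inl$ is, so $\join{T^{\ast\infty}}{T^{\ast\infty}}$ and hence $T^{\ast\infty}$ is contractible. This correctly avoids circularity with \autoref{thm:idempotents}, whose proof in the paper \emph{uses} the present theorem, so you could not simply have cited it. Be aware, however, of what your ``cofinality and reindexing'' costs: establishing that $\inl:T^{\ast\infty}\to\join{T^{\ast\infty}}{T^{\ast\infty}}$ is an equivalence presupposes that the join (hence products and pushouts) commutes with sequential colimits, plus a Fubini-and-cofinality argument for $\N\times\N$-indexed colimits, none of which is otherwise needed and all of which must be derived from descent. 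The paper's route extracts exactly the contractibility it needs from a point in the fiber, at the price of a slightly self-referential use of the universal property; yours yields the stronger, cleaner fact that $A^{\ast\infty}$ is always a mere proposition, at the price of substantially more colimit machinery.
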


\begin{proof}[Construction]
Let $f:A\to X$ be a map in $\UU$.
First, we define a sequence
\begin{equation*}
\begin{tikzcd}
\im_\ast^0(f) \arrow[dr,swap,near start,"f^{\ast 0}"] \arrow[r,"i_{0}"] & \im_\ast^1(f) \arrow[d,swap,near start,"f^{\ast 1}"] \arrow[r,"i_1"] & \im_\ast^2(f) \arrow[dl,swap,"f^{\ast 2}" xshift=.5em] \arrow[r,"i_2"] & \cdots \arrow[dll,"f^{\ast 3}"] \\
& X.
\end{tikzcd}
\end{equation*}
We take $\im_\ast^0(f)\defeq \emptyt$, with the unique map into $X$. Then we take $\im_\ast^{n+1}(f)\defeq \join[X]{A}{\im_\ast^n(f)}$, and
$f^{\ast(n+1)}\defeq \join{f}{f^{\ast n}}$. The type $\im_\ast^n(f)$ is called the \define{$n$-th image approximation}, 
and the function $f^{\ast n}$ is called the \define{$n$-th join-power of $f$}. 

The \define{image} $\im(f)$ of $f$ is defined to be the sequential colimit
$\im_\ast^\infty(f)$. 
The embedding $i_f:\im(f)\to X$ is defined to be the map $f^{\ast\infty}$. 
Furthermore we have a canonical map $q_f:A\to \im(f)$ for which the triangle
\begin{equation*}
\begin{tikzcd}
A \arrow[dr,swap,"f"] \arrow[rr,"q_f"] & & \im(f) \arrow[dl,"i_f"] \\
& X
\end{tikzcd}
\end{equation*}
commutes. This satisfies the universal property of the image with respect
to embeddings by \autoref{lem:factor_join,lem:factor_seq}. Thus it remains to show that
$f^{\ast\infty}$ is an embedding, i.e.~that for any $x:X$, the type $\hfib{f^{\ast\infty}}{x}$ is a
mere proposition. 

Using the equivalence $\eqv{\isprop(T)}{(T\to\iscontr(T))}$ 
we can reduce the goal of showing that $\hfib{f^{\ast\infty}}{x}$ is a mere proposition, to
\begin{equation*}
\hfib{f^{\ast\infty}}{x}\to \iscontr (\hfib{f^{\ast\infty}}{x}).
\end{equation*}
To describe such a fiberwise map, it is equivalent to define a commuting triangle
\begin{equation*}
\begin{tikzcd}
\sm{x:X}\hfib{f^{\ast\infty}}{x} \arrow[rr] \arrow[dr,swap,"\proj 1"] & & \sm{x:X} \iscontr (\hfib{f^{\ast\infty}}{x}) \arrow[dl,"\proj 1"] \\
& X
\end{tikzcd}
\end{equation*}
Since $\iscontr(\blank)$ is a mere proposition, the projection on the right is an embedding. Since $f^{\ast\infty}$ satisfies the universal property of the image of $f$, we see that it is equivalent to show that
\begin{equation*}
\hfib{f}{x}\to \iscontr (\hfib{f^{\ast\infty}}{x}).
\end{equation*}
Let $a:A$ such that $f(a)=x$. then we need to show that $\hfib{f^{\ast\infty}}{f(a)}$ is contractible.
By Brunerie's flattening lemma, see \S 6.12 of \cite{hottbook}, it suffices to show that
\begin{equation*}
\tfcolim_n(\hfib{f^{\ast n}}{f(a)})
\end{equation*}
is contractible. By \autoref{defn:join-fiber}, it follows that $\hfib{f^{\ast n}}{f(a)}$ is equivalent
to $(\hfib{f}{f(a)})^{\ast n}$. The sequential colimit of these types is contractible, because
the maps in this type sequence all factor through the unit type.
\end{proof}

Using the join construction, we can now give a new definition of the propositional truncation.

\begin{defn}\label{defn:proptrunc}
The \define{propositional truncation} $\trunc{-1}{A}$ of a type $A:\UU$ is defined to be 
sequential colimit of the type sequence
\begin{equation*}
\begin{tikzcd}
\emptyt \arrow[r] & A \arrow[r,"\inr"] & \join{A}{A} \arrow[r,"\inr"] & \join{A}{(\join{A}{A})} \arrow[r,"\inr"] & \cdots
\end{tikzcd}\qedhere
\end{equation*}
\end{defn}

\begin{cor}
The propositional truncation $\trunc{-1}{A}$ of a type $A$ is a mere proposition satisfying the universal property of propositional truncation. 
\end{cor}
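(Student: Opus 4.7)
The plan is to observe that $\trunc{-1}{A}$ as defined in \autoref{defn:proptrunc} is literally the image, in the sense of \autoref{thm:image}, of the unique map $f:A\to\unit$: the sequence displayed in the definition is exactly the sequence $(\im_\ast^n(f), f^{\ast n})_{n:\N}$, since $\join[\unit]{A}{B}\eqvsym \join{A}{B}$ and $\im_\ast^0(f)\jdeq\emptyt$. So $\trunc{-1}{A}\jdeq\im(f)$, and \autoref{thm:image} immediately hands us an embedding $i_f:\trunc{-1}{A}\to\unit$ together with $q_f:A\to\trunc{-1}{A}$ satisfying the universal property of the image with respect to embeddings.

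From $i_f:\trunc{-1}{A}\to\unit$ being an embedding I would deduce that $\trunc{-1}{A}$ is a mere proposition: its fiber over the unique point of $\unit$ is a mere proposition by the definition of embedding, and this fiber is equivalent to $\trunc{-1}{A}$ itself because $\unit$ is contractible.

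For the universal property, let $P$ be a mere proposition. The unique map $P\to\unit$ is an embedding, because its fiber at the point of $\unit$ is equivalent to $P$. Therefore \autoref{thm:image} applies with $g\defeq(P\to\unit)$, yielding that
\begin{equation*}
\varphi^g_{q_f,Q_f}:\mathrm{Hom}_\unit(i_f,g)\to\mathrm{Hom}_\unit(f,g)
\end{equation*}
is an equivalence. I would then unpack $\mathrm{Hom}_\unit$ when the codomain is $\unit$: for any $h:T\to\unit$ and $g:P\to\unit$, the type $\mathrm{Hom}_\unit(h,g)\jdeq\sm{k:T\to P}h\htpy g\circ k$ is equivalent to $T\to P$, since the second component of the pair is a homotopy between two maps into the contractible type $\unit$ and is therefore contractible. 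Under this identification, $\varphi^g_{q_f,Q_f}$ is identified with the precomposition map $(\trunc{-1}{A}\to P)\to(A\to P)$ sending $k\mapsto k\circ q_f$, which is therefore an equivalence. This is the universal property of propositional truncation.

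The only mildly delicate step is the last identification, where one must check that $\mathrm{Hom}_\unit(h,g)\eqvsym(T\to P)$ and that under this equivalence the map $\varphi^g_{q_f,Q_f}$ becomes ordinary precomposition by $q_f$; but this is routine since the homotopy components live in a contractible type of homotopies between maps into $\unit$, so I do not expect a genuine obstacle here.
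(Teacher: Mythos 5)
Your proposal is correct and is exactly the argument the paper intends: the paper leaves the corollary without an explicit proof, but it follows from \autoref{thm:image} by identifying $\trunc{-1}{A}$ with $\im(A\to\unit)$ (up to the equivalence $\join[\unit]{A}{B}\eqvsym\join{A}{B}$), and the specialization of the image's universal property at $X\jdeq\unit$ to the usual universal property of propositional truncation is precisely the observation made in the paper's introduction. No gaps.
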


In the following application of the join construction we characterize
the `canonical' idempotents of the join operation on maps. Note that in the
definition of canonical idempotent below, there is no special status for
$\inl:A\to \join[X]{A}{A}$ compared to $\inr:A\to\join[X]{A}{A}$. Indeed, the
maps $\inl$ and $\inr$ are homotopic, and therefore one of them is an
equivalence if and only if the other is an equivalence.

\begin{thm}\label{thm:idempotents}
For any map $f:A\to X$ in $\UU$ the following are equivalent:
\begin{enumerate}
\item $f$ is an embedding,
\item $f$ is a \define{canonical idempotent} for the join operation on maps, 
in the sense that the map $\inl:A\to\join[X]{A}{A}$ is an equivalence.
\end{enumerate}
\end{thm}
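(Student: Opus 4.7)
For $(1) \Rightarrow (2)$, assume $f$ is an embedding. Then the diagonal $A \to A \times_X A$ is an equivalence, because $f(a) = f(a')$ is equivalent to $a = a'$ when $f$ is an embedding. Under this identification both projections $\pi_1, \pi_2 : A \times_X A \to A$ become the identity on $A$, so the defining span of $\join[X]{A}{A}$ is equivalent to $A \xleftarrow{\mathsf{id}} A \xrightarrow{\mathsf{id}} A$, whose pushout is $A$ with $\inl$ identified with the identity. Alternatively one may argue fiberwise via \autoref{defn:join-fiber}: each fiber $P \defeq \hfib{f}{x}$ is a mere proposition, and so is $\join{P}{P}$ by the lemma at the end of \autoref{sec:join-maps}; the universal property of the pushout applied to the pair $(\mathsf{id}_P, \mathsf{id}_P)$, with the required coherence automatic by propositionality, produces an inverse to $\inl : P \to \join{P}{P}$.

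For $(2) \Rightarrow (1)$, the plan is to show that $q_f : A \to \im(f)$ is an equivalence; combined with the embedding $i_f$ from \autoref{thm:image} and the homotopy $f \htpy i_f \circ q_f$, this realises $f$ as the composite of an equivalence and an embedding, hence itself an embedding. Since $\im(f)$ is the sequential colimit of the join-power approximations $\im_\ast^n(f)$, and $\im_\ast^1(f) = \join[X]{A}{\emptyt}$ is canonically equivalent to $A$, I would prove by induction on $n \geq 1$ that every structure map $i_n : \im_\ast^n(f) \to \im_\ast^{n+1}(f)$ is an equivalence. In the base case, under the equivalences $\im_\ast^1(f) \simeq A$ and $\im_\ast^2(f) \simeq \join[X]{A}{A}$, the map $i_1 = \inr$ corresponds to $\inr : A \to \join[X]{A}{A}$, which is homotopic to $\inl$ via the diagonal $A \to A \times_X A$ (the defining pushout square yields a path $\inl(a) = \inr(a)$ for each such diagonal point); the hypothesis that $\inl$ is an equivalence then gives that $i_1$ is an equivalence. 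For the inductive step, naturality of $\inr$ at $g \defeq i_n$ yields a commuting square with top and left both $i_n$, bottom $\join[X]{A}{i_n}$, and right $i_{n+1}$; functoriality of $\join[X]{A}{\blank}$ (which preserves equivalences, e.g.\ by \autoref{defn:join-fiber}) makes the bottom arrow an equivalence, and the $2$-out-of-$3$ property forces $i_{n+1}$ to be one. Consequently the tail of the colimit sequence consists of equivalences, $\im(f)$ is equivalent to $A$ via the colimit inclusion, and $q_f$ is thus an equivalence.

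The main obstacle is the inductive step of direction $(2) \Rightarrow (1)$, combining the naturality of $\inr$ with preservation of equivalences under $\join[X]{A}{\blank}$. Both ingredients follow routinely from the pushout definition of the join and \autoref{defn:join-fiber}, but tracking the identifications $\im_\ast^n(f) \simeq \join[X]{A}{\im_\ast^{n-1}(f)}$ through the induction requires careful bookkeeping. One should also confirm at the outset that the canonical $q_f$ coming from the colimit construction indeed corresponds to the inclusion of $A \simeq \im_\ast^1(f)$ into $\im(f)$, which is immediate from the definition.
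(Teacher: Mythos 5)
Your proposal is correct and follows essentially the same route as the paper: both directions rest on the degeneracy of the pushout span when $f$ is an embedding, and on showing that the idempotence hypothesis forces the join-power sequence to stabilize, so that $A$ is equivalent to the infinite join power, which is already known to be an embedding. The only difference is bookkeeping: the paper first reduces via \autoref{defn:join-fiber} to the fiberwise statement that a type $P$ is a mere proposition iff $\inl:P\to\join{P}{P}$ is an equivalence, and runs its induction on the components $\inl:P\to P^{\ast n}$ of a natural transformation from the constant sequence, whereas you argue directly with maps over $X$ and induct on the structure maps $i_n$ of the colimit sequence; both versions rely on the same auxiliary facts (the functorial join preserves equivalences, and a sequential colimit whose maps are eventually equivalences is equivalent to the corresponding term).
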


\begin{proof}
Recall that we have a commuting square
\begin{equation*}
\begin{tikzcd}[column sep=huge]
A \arrow[r,"\inl"] \arrow[d,swap,"\eqvsym"] & \join[X]{A}{A} \arrow[d,"\eqvsym"] \\
\sm{x:X}\hfib{f}{x} \arrow[r,swap,"\total{\inl}"] & \sm{x:X}\join{\hfib{f}{x}}{\hfib{f}{x}}.
\end{tikzcd}
\end{equation*}
It follows that $\inl:A\to\join[X]{A}{A}$ is an equivalence if and only if for
each $x:X$, the map $\inl:\hfib{f}{x}\to\join{\hfib{f}{x}}{\hfib{f}{x}}$ is an
equivalence. 
Since $f:A\to X$ is an embedding precisely when its fibers are mere 
propositions, we see that it suffices to prove the statement fiberwise.
More precisely, we show that for any $P:\UU$, the following are equivalent:
\begin{enumerate}
\item $P$ is a mere proposition,
\item $P$ is a \define{canonical idempotent} for the join operation on types, 
in the sense that the map $\inl:P\to \join{P}{P}$ is an equivalence.
\end{enumerate}
Suppose that $P$ is a mere proposition. 
Then $\join{P}{P}$ is a mere proposition, and we have $P\to\join{P}{P}$.
Moreover, we may use the universal property of the pushout to show that
$\join{P}{P}\to P$, since any two maps of type $P\times P\to P$ are equal. 
Therefore it follows that there is an equivalence $\eqv{P}{\join{P}{P}}$.
This shows that if $P$ is a mere proposition, then $P$ is an idempotent for
the join operation. Since any two maps of type $P\to\join{P}{P}$ are equal,
it also follows that $P$ is canonically idempotent.

For the converse, suppose that $\inl:P\to\join{P}{P}$ is an equivalence. 
Since we know that $P^{\ast\infty}$ is a mere proposition, 
we may show that $P$ is a mere proposition by constructing an equivalence of
type $\eqv{P}{P^{\ast\infty}}$. 
Since $P$ is the sequential colimit of the constant
type sequence at $P$, it suffices to show that the natural transformation
\begin{equation*}
\begin{tikzcd}
P \arrow[r,"{\idfunc[P]}"] \arrow[d,swap,"\inl"] & P \arrow[r,"{\idfunc[P]}"] \arrow[d,swap,"\inl"] & P \arrow[r,"{\idfunc[P]}"] \arrow[d,swap,"\inl"] & \cdots \\
P^{\ast 1} \arrow[r,swap,"\inr"] & P^{\ast 2} \arrow[r,swap,"\inr"] & P^{\ast 3} \arrow[r,swap,"\inr"] & \cdots
\end{tikzcd}
\end{equation*}
of type sequences is in fact a natural equivalence. In other words, we have
to show that for each $n:\N$, the map $\inl:P\to P^{\ast n}$ is an equivalence.

Of course, $\inl:P\to P^{\ast 1}$ is an equivalence. For the inductive step,
suppose that $\inl:P\to P^{\ast n}$ is an equivalence. First note that
we have a commuting triangle
\begin{equation*}
\begin{tikzcd}
P \arrow[rr,"\inl"] \arrow[dr,swap,"\inl"] & & \join{P}{P} \arrow[dl,"{\idfunc[P] \circledast\inl}"] \\
& P^{\ast(n+1)}
\end{tikzcd}
\end{equation*}
where $\idfunc[P] \circledast\inl$ denotes the functorial action of the join, applied
to $\idfunc[P]$ and $\inl:P\to P^{\ast n}$. Since both $\idfunc[P]$ and
$\inl:P\to P^{\ast n}$ are assumed to be equivalences, it follows that
$\idfunc[P]\circledast \inl$ is an equivalence. Therefore we get from the
$3$-for-$2$ rule that $\inl:P\to P^{\ast (n+1)}$ is an equivalence. 
\end{proof}

\section{The modified join construction}\label{sec:modified-join}

In this section we modify the join construction slightly, to construct the
image of a map $f:A\to X$ where we assume $X$ to be only locally small, rather than
small. To do this, we need to assume `global function extensionality', by which
we mean that function extensionality holds for all types, regardless of their {size
\footnote{Note that univalence implies function extensionality \emph{in} the universe $\UU$, but not necessarily global function extensionality.}}\footnote{In fact, we only need function extensionality for function types with a small domain and a locally small codomain.}.

We use the modified join construction to construct some classes of quotients
of low homotopy complexity: 
set-quotients and the Rezk completion of a precategory. 
We note that the modified join construction may also be used to construct
the $n$-truncation for any $n:\N$. 

\begin{defn}
A (possibly large) type $X$ is said to be \define{locally small} if 
for all $x,y:X$, there is a type $x='y:\UU$ and an equivalence
of type 
\begin{equation*}
\eqv{(x=y)}{(x='y)}.
\end{equation*} 
\end{defn}

\begin{rmk}
Being locally small in the above sense is a property, since in a univalent universe
any two witnesses of local smallness are equal.
\end{rmk}

\begin{eg}
Examples of locally small types include all types in $\UU$, 
the universe $\UU$ itself (by the univalence axiom), 
mere propositions of any size, 
and the exponent $A\to X$, for any $A:\UU$ and any locally small type $X$
(by global function extensionality).
\end{eg}

To construct the image of $f:A\to X$, mapping a small type $A$ into a locally
small type $X$, one can see that
the fibers of $f$ are equivalent to small types. Indeed, by the
local smallness condition, we have equivalences of type
\begin{equation*}
\eqv{\Big(\sm{a:A} f(a)=x\Big)}{\Big(\sm{a:A} f(a) =' x\Big)},
\end{equation*}
and the type on the right is small for every $x:X$. We will write
$\fib'{f}{x}$ for this modified fiber of $f$ at $x$. Since the modified fibers
are in $\UU$, we may $(-1)$-truncate them using \autoref{defn:proptrunc}. Therefore, we may define
\begin{equation}\label{eq:im_by_trunc}
\im'_t(f)\defeq\sm{x:X}\trunc{-1}{\fib'{f}{x}} 
\end{equation}
Of course, we have a commuting triangle
\begin{equation*}
\begin{tikzcd}
A \arrow[r] \arrow[dr,swap,"f"] & \im'_t(f) \arrow[d,"\proj 1"] \\
& X
\end{tikzcd}
\end{equation*}
with the universal property of the image inclusion of $f$, which follows from
Theorem 7.6.6 of \cite{hottbook}. 
Although this image exists under our working assumptions, 
it is not generally the case that $\im'_t(f)$ is a type in $\UU$.

One might also try the join construction directly to construct the image of
$f$. `The' join of two maps $f:A\to X$ and $g:B\to X$ into a locally small type
$X$ is formed taken by first taking the pullback of $f$ and $g$, and then the 
pushout of the two projections from the pullback.
However, the pullback of $f$ and $g$ is the type $\sm{a:A}{b:B}f(a)=g(b)$, and
this is not a type in $\UU$. Therefore, we may not just form the pushout
of $A \leftarrow (\sm{a:A}{b:B}f(a)=g(b)) \rightarrow B$. 
Hence we cannot follow the construction of the join of maps directly, in the
setting where we only assume $\UU$ to be closed under graph quotients.

Instead, we modify the definition of the join of maps, using the
condition that $X$ is locally small. By this condition we have
an equivalence of type $\eqv{(f(a)=g(b))}{(f(a)='g(b))}$, for any $a:A$ and 
$b:B$. It therefore follows that the type
\begin{equation}\label{eq:modified_pullback}
A\times'_X B \defeq \sm{a:A}{b:B}f(a)='g(b)
\end{equation} is
still the pullback of $f$ and $g$. We call this type the \define{modified
pullback} of $f$ and $g$.
Since each of the types $A$, $B$ and $f(a)='g(b)$ is in $\UU$, it follows that
the modified pullback $A\times_X'B$ is in $\UU$.

\begin{thm}\label{thm:modified-join}
Let $\UU$ be a univalent universe in Martin-L\"of type theory with global function extensionality, 
and assume that $\UU$ is closed under graph quotients. 

Let $A:\UU$ and let $X$ be any type which is locally small with respect to $\UU$.
Then we can construct a small type $\im'(f):\UU$, a surjective map $q'_f:A\to\im'(f)$, and an embedding $i'_f:\im'(f)\to X$ such that the triangle
\begin{equation*}
\begin{tikzcd}
A \arrow[r,"{q'_f}"] \arrow[dr,swap,"f"] & \im'(f) \arrow[d,"{i'_f}"] \\
& X
\end{tikzcd}
\end{equation*}
commutes, and $i_f:\im'(f)\to X$ has the universal property of the image inclusion of $f$, in the sense of \autoref{defn:universal}.
\end{thm}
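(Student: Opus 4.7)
The plan is to follow the join construction from \autoref{thm:image} essentially verbatim, but replacing the ordinary pullback by the modified pullback defined in equation (\ref{eq:modified_pullback}), and then checking that smallness is maintained throughout. The point is that the ordinary pullback $A\times_X B = \sum_{a:A,b:B} f(a)=g(b)$ may fail to live in $\UU$ when $X$ is only locally small, but the modified pullback $A\times'_X B$ replaces $f(a)=g(b)$ by the equivalent small type $f(a)='g(b)$, so it lies in $\UU$. Since $A\times'_X B \simeq A\times_X B$, the modified join
\begin{equation*}
\join[X]'{A}{B} \defeq A +_{A\times'_X B} B
\end{equation*}
is equivalent to the ordinary join, and all homotopical properties proved in \autoref{sec:join-maps}, including the fiber formula of \autoref{defn:join-fiber}, carry over unchanged. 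Crucially, $\join[X]'{A}{B}$ is in $\UU$ whenever $A,B:\UU$, because $\UU$ is closed under graph quotients and hence under pushouts of small spans.

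Next I would define the iterated modified join-powers by recursion: set $\im_\ast'^{0}(f) \defeq \emptyt$ with the unique map $f'^{\ast 0}:\emptyt\to X$, and inductively
\begin{equation*}
\im_\ast'^{n+1}(f)\defeq \join[X]'{A}{\im_\ast'^n(f)}, \qquad f'^{\ast(n+1)}\defeq \join'{f}{f'^{\ast n}}.
\end{equation*}
By induction on $n$, each $\im_\ast'^n(f)$ is a type in $\UU$, using local smallness of $X$ to form each modified pullback. Then $\im'(f)$ is defined as the sequential colimit of this sequence; since $\UU$ has a natural numbers object and is closed under graph quotients, sequential colimits of small types are small, so $\im'(f):\UU$. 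The maps $q_f':A\to\im'(f)$ and $i_f':\im'(f)\to X$ arise from the cone and cocone exactly as in \autoref{thm:image}.

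To verify the universal property and embedding condition, I would reuse \autoref{lem:factor_join} and \autoref{lem:factor_seq} to deduce that the triangle inherits the universal property with respect to embeddings from the $\emptyt\to X$ base case, and then argue that $i_f'$ is an embedding by the same contractibility argument: reduce $\hfib{i_f'}{x}\to\iscontr(\hfib{i_f'}{x})$ to a statement over $\sm{x:X}\hfib{f}{x}$ using the universal property, then pick $a:A$ with $f(a)=x$ and apply Brunerie's flattening lemma to rewrite $\hfib{i_f'}{f(a)}$ as the sequential colimit of $\hfib{f'^{\ast n}}{f(a)}$, which by the fiber formula is $(\hfib{f}{f(a)})^{\ast n}$, a sequence factoring through $\unit$ and hence with contractible colimit. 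Surjectivity of $q_f'$ follows because its fibers are precisely the nonempty propositions $\trunc{-1}{\fib'{f}{x}}$ appearing under the equivalence with $\im_t'(f)$ from equation (\ref{eq:im_by_trunc}).

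The main obstacle is bookkeeping rather than any new homotopical idea: one must verify that every pullback and every pushout appearing in the construction, and every application of descent, can be carried out within $\UU$ using the modified pullback, and that the resulting types remain equivalent to the honest pullbacks and pushouts so that the fiber formula of \autoref{defn:join-fiber} and the key descent/flattening arguments remain valid. The global function extensionality assumption is needed precisely where the equivalence $(f(a)=g(b))\simeq (f(a)='g(b))$ must be lifted fiberwise to an equivalence between a product of identity types and a product of their small replacements, ensuring the modified and unmodified constructions agree up to equivalence at every stage.
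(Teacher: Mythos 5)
Your proposal is correct and follows essentially the same route as the paper: replace the pullback by the modified pullback (small by local smallness of $X$), form the modified join powers, take the sequential colimit, and transfer the embedding and universal-property arguments from \autoref{thm:image} unchanged. The only point the paper flags that you leave implicit is that forming $f\ast' g:\join[X]{A}{B}\to X$ requires the induction principle of graph quotients to eliminate into the (possibly large) locally small type $X$, which is why that principle is assumed for type families of arbitrary size.
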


\begin{proof}
We define the modified join $f \ast' g$ of $f$ and $g$ as the pushout of the
modified pullback, as indicated in the diagram
\begin{equation*}
\begin{tikzcd}
A\times_X' B \arrow[r,"\pi_2"] \arrow[d,swap,"\pi_1"] \arrow[dr, phantom, "{\ulcorner}", at end] & B \arrow[d,"\inr"] \arrow[ddr,bend left=15,"g"] \\
A \arrow[r,swap,"\inl"] \arrow[drr,bend right=15,swap,"f"] & A\ast_X' B \arrow[dr,densely dotted,swap,near start,"{f \ast' g}" xshift=1ex] \\
& & X.
\end{tikzcd}
\end{equation*}
Note that this is where we need to know that we can use the induction principle
of graph quotients to define maps from graph quotients into locally small types.

Now we can consider, for any $f:A\to X$ from $A:\UU$ into a locally small type
$X$, the modified join powers $f^{\ast'n}$ of $f$. The existence of each of
them follows from the assumption that $\UU$ is closed under graph quotients.
By an argument completely analogous to the argument given in the original join
construction, it follows that the sequential colimit
$i'_f\defeq f^{\ast'\infty}$ is an embedding with the universal property of the image 
inclusion of $f$. 
\end{proof}

\section{Direct applications of the modified join construction}

Recall that $\prop_\UU$ is the type $\sm{P:\UU}\isprop(P)$. A $\prop_\UU$-valued
equivalence relation on a type $A$, is a binary relation $R:A\to A\to\prop_\UU$
that is reflexive, symmetric and transitive in the expected sense.
A more thorough discussion on set-quotients can be found in \S 6.10 of
\cite{hottbook}.

\begin{cor}\label{cor:setquotients}
For any $\prop_\UU$-valued equivalence relation $R:A\to A\to\prop_\UU$ over a type
$A:\UU$, we get from the construction in \autoref{thm:modified-join} a type $A/R:\UU$
with the universal property of the quotient.
\end{cor}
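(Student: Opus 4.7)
The plan is to realize $A/R$ as the image, in the sense of \autoref{thm:modified-join}, of the canonical map
\begin{equation*}
\rho : A \to (A\to\prop_\UU), \qquad \rho(a) \defeq R(a,-).
\end{equation*}
First I would verify that the codomain $A\to\prop_\UU$ is locally small. Since $\prop_\UU$ is locally small (identity types of mere propositions are themselves propositions, which are locally small), and $A:\UU$, the example of locally small types stated after the definition of local smallness gives that $A\to\prop_\UU$ is locally small. Moreover, it is a set: $\prop_\UU$ is a set by univalence together with the fact that bi-implications between mere propositions form a mere proposition, and function types into sets are sets by function extensionality. Hence the image $\im'(\rho)$, obtained from \autoref{thm:modified-join}, is a type $A/R:\UU$, equipped with a surjective map $q_R\defeq q'_\rho:A\to A/R$ and an embedding $i_R\defeq i'_\rho:A/R\to(A\to\prop_\UU)$ making the evident triangle commute. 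Since $A/R$ embeds into a set, it is itself a set.

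Next I would show the key equivalence
\begin{equation*}
(q_R(a)=q_R(b)) \simeq R(a,b).
\end{equation*}
Because $i_R$ is an embedding, $q_R(a)=q_R(b)$ is equivalent to $\rho(a)=\rho(b)$, i.e.\ to $R(a,-) = R(b,-)$. Given $R(a,b)$, symmetry and transitivity of $R$ yield a pointwise bi-implication $R(a,c)\Leftrightarrow R(b,c)$ for every $c:A$; function extensionality combined with propositional extensionality (a consequence of univalence on $\prop_\UU$) then upgrades this to the required identification. Conversely, if $R(a,-)=R(b,-)$, evaluating at $b$ and using reflexivity $R(b,b)$ gives $R(a,b)$.

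Now I would derive the universal property of the set-quotient. Let $S$ be a set and let $f:A\to S$ be a map satisfying $R(a,b)\to f(a)=f(b)$. To produce a factoring $\bar{f}:A/R\to S$, I would apply the universal property of the image inclusion as stated in \autoref{defn:universal}: the map $S\to(A\to\prop_\UU)\times S$ is not directly what I need, so instead I consider the embedding $(\proj 1,f\circ\proj 1):\sm{z:A/R}\fib{f}{?}\to\dots$ --- actually, the cleanest route is to build $\bar f$ directly from the surjectivity of $q_R$ combined with the compatibility with $R$. Concretely, since $S$ is a set and $q_R$ is surjective, the type of extensions of $f$ along $q_R$ is a mere proposition; the compatibility of $f$ with $R$, combined with the equivalence above, gives that $f$ is constant on the fibers of $q_R$, and hence extends. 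Uniqueness follows from surjectivity of $q_R$ together with the fact that $S$ is a set.

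The main obstacle will be the verification of the key equivalence $(q_R(a)=q_R(b))\simeq R(a,b)$: the forward direction in particular hinges on using the embedding property of $i_R$ and then unpacking equalities in $A\to\prop_\UU$ via function extensionality and univalence, which is the step where one actually uses that $R$ is an equivalence relation rather than an arbitrary $\prop_\UU$-valued relation. Once this equivalence is in hand, the universal property is routine since $A/R$ is a set and $q_R$ is surjective.
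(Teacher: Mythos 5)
Your proposal is correct and takes essentially the same approach as the paper: both construct $A/R$ as the image, via \autoref{thm:modified-join}, of the map $a\mapsto R(a,\blank)$ into the locally small set $A\to\prop_\UU$. The only difference is that where the paper cites \S 10.1.3 and \S 6.10 of the HoTT book for the verification of the universal property, you sketch that verification directly (the characterization $(q_R(a)=q_R(b))\simeq R(a,b)$ via the embedding, extensionality, and the equivalence-relation axioms, followed by factoring through the surjection into a set), which is precisely the argument being cited.
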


\begin{proof}
In \S 10.1.3 of \cite{hottbook}, it is shown that the subtype 
\begin{equation*}
\sm{P:A\to\UU} \trunc{-1}{\sm{a:A} R(a)=P}
\end{equation*} 
of the type $A\to\prop_\UU$ has the universal property of the set-quotient.
Note that this is the image of $R$, as a function from $A$ to the locally small
type $A\to\prop_\UU$. 

Since the type $\im'(R):\UU$, which we obtain from \autoref{thm:modified-join},
has the universal property of the image, the universal property of the
set-quotient follows from an argument analogous to that given in \S 6.10 of \cite{hottbook}.
\end{proof}

By a small (pre)category $A$ we mean a (pre)category $A$ for which the type
$\mathsf{obj}(A)$ of objects is in $\UU$, and for which the type
$\mathsf{hom}_A(x,y)$ of morphisms from $x$ to $y$ is also in $\UU$, for any
$x,y:\mathsf{obj}(A)$. Pre-categories and Rezk-complete categories were introduced
in Homotopy Type Theory in \cite{AKS}.

\begin{cor}\label{cor:rezkcompletion}
The Rezk completion $\hat{A}$ of any small precategory $A$ can be constructed in any 
univalent universe that is closed under graph quotients,
and $\hat{A}$ is again a small category. 
\end{cor}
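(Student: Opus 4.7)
The plan is to adapt the construction of \cite{AKS}, where the Rezk completion of a precategory $A$ is realized as the image of the Yoneda embedding $y : \mathsf{obj}(A) \to \mathsf{obj}(\mathrm{PSh}(A))$ into the presheaf precategory $\mathrm{PSh}(A) \defeq [A^{\mathrm{op}}, \mathrm{Set}_\UU]$, where $\mathrm{Set}_\UU$ denotes the type of $\UU$-small sets and $y$ sends $a$ to the representable $\mathsf{hom}_A(-,a)$. The codomain $\mathsf{obj}(\mathrm{PSh}(A))$ is in general not small, but if it is locally small then \autoref{thm:modified-join} delivers a small type $\mathsf{obj}(\hat{A}) \defeq \im'(y):\UU$ together with a surjection $q_y : \mathsf{obj}(A) \to \mathsf{obj}(\hat{A})$ and an embedding $i_y : \mathsf{obj}(\hat{A}) \to \mathsf{obj}(\mathrm{PSh}(A))$ satisfying the universal property of the image inclusion of $y$.

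The central task, and the main obstacle, is therefore to verify local smallness of $\mathsf{obj}(\mathrm{PSh}(A))$. By univalence applied to $\mathrm{Set}_\UU$ and the structure identity principle for the algebraic structure of a functor, for any $F,G : \mathsf{obj}(\mathrm{PSh}(A))$ the identity type $F = G$ is equivalent to the type of natural isomorphisms $F \cong G$. Under global function extensionality such a natural isomorphism amounts to a fiberwise equivalence $\prd{a:\mathsf{obj}(A)} \eqv{F(a)}{G(a)}$ together with a small product of naturality identifications indexed by morphisms of $A$; since $\mathsf{obj}(A)$ and every hom-type of $A$ lie in $\UU$, and each $\eqv{F(a)}{G(a)}$ is in $\UU$, the entire type is small.

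Granted local smallness, I equip $\hat{A}$ with a precategory structure by pulling morphism-types back along $i_y$: set $\mathsf{hom}_{\hat{A}}(P,Q) \defeq \mathsf{hom}_{\mathrm{PSh}(A)}(i_y(P), i_y(Q))$, which is small by the same function-extensionality argument applied to natural transformations instead of natural isomorphisms, with identities and composition inherited from $\mathrm{PSh}(A)$. Univalence of $\hat{A}$ then follows because $i_y$ is fully faithful and $\mathrm{PSh}(A)$ is itself a univalent category: identities between objects of $\hat{A}$ correspond via $i_y$ to identities of presheaves, which correspond to natural isomorphisms, which correspond to isomorphisms in $\hat{A}$. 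The universal property of the Rezk completion then follows from the standard argument in \cite{AKS}, as the functor $q_y : A \to \hat{A}$ is fully faithful (by Yoneda) and essentially surjective (by construction of $\hat{A}$ as the image of $y$), hence a weak equivalence into a univalent category.
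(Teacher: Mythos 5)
Your proposal follows essentially the same route as the paper: construct $\hat{A}$ as the image of the object part of the Yoneda embedding into the presheaf category, verify local smallness of its type of objects via univalence of the presheaf category (so that $F=G$ is equivalent to the small type $F\cong G$ of natural isomorphisms), apply \autoref{thm:modified-join}, and then import the remaining verifications (precategory structure, univalence of $\hat{A}$, full faithfulness and essential surjectivity of the comparison functor) from the first proof of Theorem 9.9.5 of \cite{hottbook}. The paper is terser about the final steps, simply deferring them to the HoTT book, but the content is the same.
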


\begin{proof}
In the first proof of Theorem 9.9.5 of \cite{hottbook}, the Rezk completion of
a precategory $A$ is constructed as the image of the action on objects of the
Yoneda embedding $\mathbf{y}:A\to\mathbf{Set}^{\op{A}}$.

The hom-set $\mathbf{Set}^{\op{A}}(F,G)$ is the type of natural transformations
from $F$ to $G$. It is clear from Definition 9.9.2 of \cite{hottbook}, that the
type $\mathbf{Set}^{\op{A}}(F,G)$ is in $\UU$ for any two presheaves $F$ and $G$
on $A$. In particular, the type $F\cong G$ of isomorphisms from $F$ to $G$
is small for any two presheaves on $A$.

Since $\mathbf{Set}$ is a category, it follows from Theorem 9.2.5 of
\cite{hottbook} that the presheaf pre-category $\mathbf{Set}^{\op{A}}$ is a category.
Since the type of isomorphisms between any two objects is
small, it follows that the type of objects of
$\mathbf{Set}^{\op{A}}$ is locally small. 

Hence we can use \autoref{thm:modified-join} to construct the image of the
action on objects of the Yoneda-embedding. The image constructed in this way
is of course equivalent to the type $\hat{A}_0$ defined in the first proof of
Theorem 9.9.5. Hence the arguments presented in the rest of that proof apply
as well to our construction of the image. We therefore conclude that the Rezk completion
of any small precategory is a small category.
\end{proof}

\section{The join extension and connectivity theorems}

Some basic results about the join of maps include a generalization of Lemma 8.6.1 of
\cite{hottbook}, which we call the join extension theorem (\autoref{thm:join-extension}), and a closely
related theorem which we call the join connectivity theorem (\autoref{thm:join-connectivity}).
The idea of the join connectivity theorem came from Proposition 8.15 in 
Rezk's notes on homotopy toposes \cite{Rezk}.
We use the join connectivity theorem in 
\autoref{thm:joinconstruction-connectivity} to conclude that the connectivity 
of the approximations of the image inclusion increases.
In this sense, our approximating sequence of the image is very nice:
after $n$ steps of the approximation, only stuff of homotopy level higher than
$n$ is added.

Lemma 8.6.1 of \cite{hottbook} states that if $f:A\to B$ is an $m$-connected map,
and if $P:B\to\UU$ is a family of $(m+n+2)$-truncated types,
then precomposing by $f$ gives an $n$-truncated map of type
\begin{equation*}
\Big(\prd{b:B}P(b)\Big)\to\Big(\prd{a:A}P(f(a))\Big).
\end{equation*}
The general join extension theorem states that if $f:A\to B$ is an $M$-connected
map for some type $M$, and $P:B\to\UU$ is a family of $(\join{M}{N})$-local 
types, then the mentioned precomposition is an $N$-local map 
(we recall the terminology shortly). 
Note that, if one takes spheres $\Sn^m$ and $\Sn^n$ for $M$ and $N$, 
one retrieves Lemma 8.6.1 of \cite{hottbook} as a simple corollary.

We conclude this section with \autoref{thm:joinconstruction-connectivity},
asserting that $f^{\ast n}$ factors through an $(n-2)$-connected map to
$\im(f)$, for each $n:\N$.

\begin{defn}\label{defn:local}
or a given type $M$, a type $A$ is said to be \define{$M$-local} if the map
\begin{equation*}
\lam{a}{x}a : A \to (M \to A)  
\end{equation*}
is an equivalence.
\end{defn}

In other words, the type $A$ is $M$-local if each $f:M\to A$ has a unique extension along the
map $M\to\unit$, as indicated in the diagram
\begin{equation*}
\begin{tikzcd}
M \arrow[r,"f"] \arrow[d] & A \\
\unit. \arrow[ur,densely dotted]
\end{tikzcd}
\end{equation*}
Note that being $M$-local in the above sense is a mere proposition, so that the
type of all $M$-local types in $\UU$ is a subuniverse of $\UU$%
\footnote{When $\UU$ is assumed to be closed under recursive higher inductive
types, there exists an operation $\modal_M : \UU\to\UU$, 
which maps a type $A$ to the universal $M$-local type $\modal_M(A)$
with a map of type $A\to\modal_M(A)$. This operation is called 
\define{localization at $M$}, and it is a modality. 
This is just a special case of localization. There is a more general
notion of localization at a family of maps, see%
~\cite{RijkeShulmanSpitters}, for which the localization operation
is a reflective subuniverse, but \emph{not generally} a modality.%
}.

Dually, a type $X$ is said to be \define{$M$-connected} if for every $M$-local
type $A$, the map
\begin{equation*}
\lam{a}{x}{a} : A \to (X \to A)
\end{equation*}
is an equivalence. Thus in particular, $M$ itself is $M$-connected. 
Equivalently, a type $X$ is $M$-connected if the type
$\modal_M(X)$ is contractible. The survey article \cite{RijkeShulmanSpitters} contains much
more information about local types and the operation of localization.
In the present article, we focus on the interaction
of the join operation with the notions of being local and of connectedness.

\begin{defn}
Let $M$ be a type. We say that a type $X$ has the \define{$M$-extension property}
with respect to a map $F:A\to B$, if the map
\begin{equation*}
\lam{g}{a} g(F(a)) : (B\to X)\to (A\to X)
\end{equation*}
is $M$-local. In the case $M\jdeq\unit$, we say that $X$ is $F$-local.
\end{defn}

Usually, a type $A$ is said to be $M$-connected if its localization
$\modal_M(A)$ is contractible. 
Since we have not assumed that the universe is closed under a general class of recursive higher inductive types, we cannot simply
assume that the operation
$\modal_M:\UU\to\UU$ is available. 
Therefore we give a definition of connectedness which is equivalent to the
usual one in the presence of this operation.

\begin{defn}\label{defn:connected}
We say that a type $A$ is \define{$M$-connected} if any $M$-local type is $A$-local. 
A map $f:A\to X$ is said to be $M$-connected if its fibers are $M$-connected.
\end{defn}

\begin{lem}\label{lem:equivalent-extension-problems}
For any three types $A$, $A'$ and $B$, the type $B$ is $(\join{A}{A'})$-local
if and only if for any any $f:A\to B$, the type
\begin{equation*}
\sm{b:B}\prd{a:A}f(a)=b
\end{equation*}
is $A'$-local.
\end{lem}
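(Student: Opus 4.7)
The plan is to rewrite the type $(\join{A}{A'}\to B)$ using the universal property of the pushout together with the type-theoretic axiom of choice $\choice{\infty}$, and then factor the constant map $B\to (\join{A}{A'}\to B)$ so that its being an equivalence reduces transparently to the stated condition on the types $E_f \defeq \sm{b:B}\prd{a:A}f(a)=b$.

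First, by the universal property of $\join{A}{A'}$ as the pushout of $A\leftarrow A\times A'\to A'$, and then an application of $\choice{\infty}$ swapping $\sm{f':A'\to B}$ past $\prd{a':A'}$, I would establish the chain of equivalences
\begin{equation*}
(\join{A}{A'}\to B) \eqvsym \sm{f:A\to B}{f':A'\to B}\prd{a:A,a':A'}f(a)=f'(a') \eqvsym \sm{f:A\to B}(A'\to E_f).
\end{equation*}
Under this composite equivalence, the constant map $B\to (\join{A}{A'}\to B)$ sending $b$ to $\lam{x}b$, whose being an equivalence is the definition of $(\join{A}{A'})$-locality, is identified with the map sending $b$ to $(\lam{a}b,\,\lam{a'}(b,\lam{a}\mathsf{refl}_b))$.

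The key observation is that this map factors as $\beta\circ\alpha$, where
\begin{equation*}
\alpha : B \to \sm{f:A\to B}E_f,\quad b\mapsto (\lam{a}b,\,(b,\lam{a}\mathsf{refl}_b)),
\end{equation*}
and
\begin{equation*}
\beta : \sm{f:A\to B}E_f \to \sm{f:A\to B}(A'\to E_f),\quad (f,e)\mapsto (f,\lam{a'}e).
\end{equation*}
I would show $\alpha$ is always an equivalence: using function extensionality to identify $E_f$ with $\sm{b:B}(f=\lam{a}b)$ gives $\sm{f:A\to B}E_f \eqvsym \sm{b:B}\sm{f:A\to B}(f=\lam{a}b)$, and the inner type is a based-path singleton, so the total space contracts to $B$. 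On the other hand, $\beta$ is fibered over $A\to B$ with fiber over $f$ exactly the constant map $E_f\to(A'\to E_f)$; hence $\beta$ is an equivalence if and only if for every $f:A\to B$ that constant map is an equivalence, i.e.~if and only if each $E_f$ is $A'$-local in the sense of \autoref{defn:local}.

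Combining these, $B$ is $(\join{A}{A'})$-local iff $\beta\circ\alpha$ is an equivalence, iff (since $\alpha$ always is) $\beta$ is, iff every $E_f$ is $A'$-local. The main obstacle is the bookkeeping in the first step: verifying that the constant map $\lam{b}{x}b$ really corresponds under the two equivalences to the pair $(\lam{a}b,\lam{a'}(b,\lam{a}\mathsf{refl}_b))$, and hence factors as $\beta\circ\alpha$. Once that identification is carefully traced through the universal property of the pushout and the witness of $\choice{\infty}$, the rest is a formal manipulation.
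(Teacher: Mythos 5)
Your proposal is correct and follows essentially the same route as the paper: both rest on the equivalence $(\join{A}{A'}\to B)\eqvsym\sm{f:A\to B}\big(A'\to\sm{b:B}\prd{a:A}f(a)=b\big)$ obtained from the universal property of the pushout together with $\choice{\infty}$. The only difference is in how the conclusion is extracted --- the paper compares the fibers of the two precomposition maps directly and identifies them via the universal property of the join, whereas you factor the constant map as an always-equivalence (contractibility of $\sm{f:A\to B}\sm{b:B}\prd{a:A}f(a)=b$ onto $B$) followed by the totalization of the fiberwise constant maps, then apply $3$-for-$2$ and the fiberwise-equivalence criterion; this is a slightly cleaner packaging of the same content.
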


\begin{proof}
To give $f:A\to B$ and $(f',H):A'\to\sm{b:B}\prd{a:A}f(a)=b$ is equivalent to giving a map $g:\join{A}{A'}\to B$. Concretely, the equivalence is given by substituting in $g:\join{A}{A'}\to B$ the constructors of the join, to obtain $\pairr{g\circ\inl,g\circ\inr,\apfunc{g}\circ\glue}$. 

Now observe that the fiber of precomposing with the unique map $!_{\join{A}{A'}} : \join{A}{A'}\to\unit$ at $g : \join{A}{A'}\to B$, is equivalent to
\begin{equation*}
\sm{b:B}\prd{t:\join{A}{A'}}g(t)=b.
\end{equation*}
Similarly, the fiber of precomposing with the unique map $!_{A'} : A'\to\unit$ at $\pairr{g\circ\inr,\apfunc{g}\circ\glue} : A'\to\sm{b:B}\prd{a:A}f(a)=b$ is equivalent to
\begin{equation*}
\sm{b:B}{h:\prd{a:A}g(\inl(a))=b}\prd{a':A'}\pairr{g(\inr(a')),\apfunc{g}(\glue(a,a'))}=\pairr{b,h}.
\end{equation*}
By the universal property of the join, these types are equivalent.
\end{proof}

\begin{lem}\label{lem:join-local}
Suppose $A$ is an $M$-connected type, and that $B$ is an $(\join{M}{N})$-local type. Then $B$ is $(\join{A}{N})$-local.
\end{lem}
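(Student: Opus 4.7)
The plan is to chain together \autoref{lem:equivalent-extension-problems} in both directions with the commutativity of the join (recorded in the remark following \autoref{defn:join-fiber}) and the definition of $M$-connectedness. I do not expect to need any new pushout or descent arguments; the content is essentially a reshuffling of quantifiers.

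First I would use commutativity to rewrite the hypothesis that $B$ is $(\join{M}{N})$-local as $B$ being $(\join{N}{M})$-local. Then \autoref{lem:equivalent-extension-problems}, applied with $N$ in the role of $A$ and $M$ in the role of $A'$, tells us that this is equivalent to the assertion that for every map $g:N \to B$, the type
\begin{equation*}
U_g \defeq \sm{b:B}\prd{n:N} g(n) = b
\end{equation*}
is $M$-local.

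Next I would invoke \autoref{defn:connected}: since $A$ is $M$-connected, every $M$-local type is automatically $A$-local. Hence each $U_g$ is $A$-local.

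Finally, applying \autoref{lem:equivalent-extension-problems} once more — now in the opposite direction, still with $N$ in the role of $A$ but with $A$ in the role of $A'$ — this last statement is equivalent to $B$ being $(\join{N}{A})$-local, and a final appeal to commutativity of the join yields $(\join{A}{N})$-locality, as required. If there is any obstacle it is purely bookkeeping: one must keep straight which of the two arguments of the join is playing which role in each invocation of \autoref{lem:equivalent-extension-problems}, and one should verify that the equivalence $\join{M}{N}\simeq\join{N}{M}$ indeed transports the property of being local (which it does, since local-ness only depends on the type up to equivalence).
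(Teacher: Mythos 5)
Your proposal is correct and is essentially the paper's own proof: the paper likewise reduces both the hypothesis and the goal via \autoref{lem:equivalent-extension-problems} to the $M$-locality, respectively $A$-locality, of the types $\sm{b:B}\prd{n:N}g(n)=b$, and bridges the two with \autoref{defn:connected}. You are merely more explicit about the role of commutativity of the join and about which argument of the join plays which role in each invocation, which the paper leaves implicit.
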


\begin{proof}
Let $B$ be a $(\join{M}{N})$-local type. Our goal of showing that $B$ is
$(\join{A}{N})$-local is equivalent to showing that for any $f:N\to B$, 
the type 
\begin{equation*}
\sm{b:B}\prd{a:A}f(a)=b
\end{equation*}
is $A$-local. 
Since $B$ is assumed to be $(\join{M}{N})$-local, we know that this type is 
$M$-local. Since $A$ is $M$-connected, this type is also $A$-local.
\end{proof}

\begin{lem}\label{lem:N-extension-simple}
Let $A$ be $M$-connected and let $B$ be $(\join{M}{N})$-local. Then the map
\begin{equation*}
\lam{b}{a}b:B\to B^A
\end{equation*}
is $N$-local. 
\end{lem}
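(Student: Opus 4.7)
The plan is to combine the two preceding lemmas by identifying the fibers of the constant map $B \to B^A$ with the types appearing in \autoref{lem:equivalent-extension-problems}. First I would note that since a map being $N$-local means having $N$-local fibers, the task reduces to showing that for every $f : A \to B$, the fiber
\begin{equation*}
\fib{\lam{b}{a}b}{f} \defeq \sm{b:B} (\lam{a}b) = f
\end{equation*}
is $N$-local as a type.

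Next I would rewrite this fiber. By function extensionality (which we have assumed globally), the identity $(\lam{a}b) = f$ is equivalent to $\prd{a:A}b=f(a)$, and by reversing identities this is equivalent to $\prd{a:A}f(a)=b$. Hence the fiber in question is equivalent to
\begin{equation*}
\sm{b:B}\prd{a:A}f(a)=b,
\end{equation*}
which is precisely the type whose locality is characterized by \autoref{lem:equivalent-extension-problems}. Since $N$-locality is invariant under equivalence, it then suffices to show that this latter type is $N$-local for every $f : A \to B$.

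Finally, I would apply the previous two lemmas in sequence. Since $A$ is $M$-connected and $B$ is $(\join{M}{N})$-local by hypothesis, \autoref{lem:join-local} yields that $B$ is $(\join{A}{N})$-local. Then \autoref{lem:equivalent-extension-problems}, applied with the pair $(A, N)$ in place of $(A, A')$, tells us that for every $f : A \to B$ the type $\sm{b:B}\prd{a:A}f(a)=b$ is $N$-local. Combined with the fiber identification above, this gives the conclusion. There is no real obstacle here once the fiber has been identified; the only point to be careful about is the direction of the equality $f(a)=b$ versus $b=f(a)$ when passing through function extensionality, but this is purely cosmetic since identity types are symmetric.
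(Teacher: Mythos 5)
Your proposal is correct and follows essentially the same route as the paper: identify the fiber of $\lam{b}{a}b$ at $f$ with $\sm{b:B}\prd{a:A}f(a)=b$, then combine \autoref{lem:join-local} and \autoref{lem:equivalent-extension-problems} (the latter being an equivalence, so the direction you use is the same content as the paper's reduction). The only difference is that you spell out the function-extensionality step in the fiber identification, which the paper leaves implicit.
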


\begin{proof}
The fiber of $\lam{b}{a}b$ at a function $f:A\to B$ is equivalent to the type $\sm{b:B}\prd{a:A}f(a)=b$. Therefore, it suffices to show that this type is $N$-local. By \autoref{lem:equivalent-extension-problems}, it is equivalent to show that $B$ is $(\join{A}{N})$-local. This is solved in \autoref{lem:join-local}.
\end{proof}

\begin{thm}[Join extension theorem]\label{thm:join-extension}
Suppose $f:X\to Y$ is $M$-connected, and let $P:Y\to\UU$ be a family of
$(\join{M}{N})$-local types for some type $N$. Then precomposition by $f$, i.e.
\begin{equation*}
\lam{s}s\circ f : \Big(\prd{y:Y}P(y)\Big)\to\Big(\prd{x:X}P(f(x))\Big),
\end{equation*}
is an $N$-local map.
\end{thm}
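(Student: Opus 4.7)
The plan is to identify the precomposition map, up to equivalence, with a dependent product (over $Y$) of constant-function maps of the form $P(y)\to P(y)^{\hfib{f}{y}}$, and then to apply \autoref{lem:N-extension-simple} fiberwise.

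First I would establish the equivalence
\begin{equation*}
\prd{x:X}P(f(x)) \eqvsym \prd{y:Y}\big(\hfib{f}{y}\to P(y)\big),
\end{equation*}
which follows from contractibility of the singleton $\sm{y:Y}f(x)=y$ together with the standard commutation of $\Pi$-types: a section $t$ on the left corresponds to the section $\lam{y}{(x,p)}\transp{\opp{p}}{t(x)}$ on the right. I would then check that under this equivalence the precomposition map $\lam{s}s\circ f$ becomes the map
\begin{equation*}
\Big(\prd{y:Y}P(y)\Big) \to \prd{y:Y}\big(\hfib{f}{y}\to P(y)\big)
\end{equation*}
which sends $s$ to $\lam{y}{(x,p)}{s(y)}$, i.e.~the dependent product over $y:Y$ of the constant-function map $\lam{u}{a}u:P(y)\to P(y)^{\hfib{f}{y}}$. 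Equivalently, the fiber of $f^\ast$ at a point $t$ unpacks, via function extensionality and the same $\Pi$-swap, into $\prd{y:Y}\big(\sm{u:P(y)}\prd{a:\hfib{f}{y}} u=t'_y(a)\big)$, where $t'$ is the transpose of $t$; this is exactly a $\Pi$-indexed family of fibers of the constant-function maps just described.

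Next, for each $y:Y$, since $f$ is $M$-connected the fiber $\hfib{f}{y}$ is an $M$-connected type, and by hypothesis $P(y)$ is $(\join{M}{N})$-local. Hence \autoref{lem:N-extension-simple} applies and tells us that the map $P(y)\to P(y)^{\hfib{f}{y}}$ is $N$-local.

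Finally, I would conclude by noting that a dependent product of $N$-local maps is an $N$-local map: fibers of a dependent product over $Y$ are dependent products of fibers over $Y$, and $N$-local types are closed under dependent products (this is immediate from the definition of locality, using function extensionality to pull $N\to(\blank)$ through the $\Pi$). This closure, combined with the fiberwise $N$-locality from \autoref{lem:N-extension-simple}, yields that $f^\ast$ is $N$-local.

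The step I expect to demand the most care is the first one, namely verifying that under the $\Pi$-swap the precomposition map really is pointwise the constant-function map; this amounts to tracking transports along paths of the form $p:f(x)=y$ and checking that the equivalence on sections converts $t\mapsto t\circ f$ into $s\mapsto \lam{y}{(x,p)}{s(y)}$. Once this identification is made, the remainder of the argument is a routine application of \autoref{lem:N-extension-simple} and closure of $N$-local types under dependent products.
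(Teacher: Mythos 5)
Your proposal is correct and takes essentially the same route as the paper: both compute the fiber of the precomposition map, swap the $\Pi$ and $\Sigma$ to exhibit it as a dependent product over $y:Y$ of fibers of the constant-function maps $P(y)\to (\hfib{f}{y}\to P(y))$, and then apply \autoref{lem:N-extension-simple} fiberwise using that $\hfib{f}{y}$ is $M$-connected and $P(y)$ is $(\join{M}{N})$-local. The only difference is cosmetic: you make explicit the closure of $N$-local types under dependent products, which the paper leaves implicit in its reduction to the pointwise statement.
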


\begin{proof}
Let $g:\prd{x:X}P(f(x))$. Then we have the equivalences
\begin{align*}
\hfib{(\blank\circ f)}{g} 
& \eqvsym \sm{s:\prd{y:Y}P(y)}\prd{x:X}s(f(x))=g(x) \\
& \eqvsym \sm{s:\prd{y:Y}P(y)}\prd{y:Y}{(x,p):\hfib{f}{y}} s(y)= \trans{p}{g(x)} \\
& \eqvsym \prd{y:Y}\sm{z:P(y)}\prd{(x,p):\hfib{f}{y}} \trans{p}{g(x)}=z \\
& \eqvsym \prd{y:Y}\hfib{\lam{z}{(x,p)}z}{\lam{(x,p)}\trans{p}{g(x)}}.
\end{align*}
Therefore, it suffices to show for every $y:Y$, that $P(y)$ has the $N$-extension property with respect to the unique map of type $\hfib{f}{y}\to\unit$. This is a special case of \autoref{lem:N-extension-simple}.
\end{proof}

\begin{thm}\label{thm:simple-join}
Suppose $X$ is an $M$-connected type and $Y$ is an $N$-connected type. Then $\join{X}{Y}$ is an $(\join{M}{N})$-connected type.
\end{thm}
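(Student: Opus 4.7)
The plan is to unfold the definition of $M$-connectedness from \autoref{defn:connected} and then apply \autoref{lem:join-local} twice, once for each factor in the join. Concretely, to show that $\join{X}{Y}$ is $(\join{M}{N})$-connected, I must show that every $(\join{M}{N})$-local type $B$ is also $(\join{X}{Y})$-local. So let $B$ be a $(\join{M}{N})$-local type.

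First I would use that $X$ is $M$-connected together with \autoref{lem:join-local}: the hypotheses of that lemma are met (with $A\defeq X$), so we conclude that $B$ is $(\join{X}{N})$-local. The join of maps with a common codomain is commutative up to homotopy (as noted in the remark after \autoref{defn:join-fiber}, applied with $X\defeq\unit$), so $B$ is equivalently $(\join{N}{X})$-local. Now applying \autoref{lem:join-local} a second time, this time with $A\defeq Y$, $M\defeq N$, and with $X$ in place of $N$, uses that $Y$ is $N$-connected to conclude that $B$ is $(\join{Y}{X})$-local, and hence by commutativity again $(\join{X}{Y})$-local.

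Since $B$ was an arbitrary $(\join{M}{N})$-local type, this shows that every $(\join{M}{N})$-local type is $(\join{X}{Y})$-local, which is exactly the statement that $\join{X}{Y}$ is $(\join{M}{N})$-connected.

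There isn't really a main obstacle here: the whole argument is a two-fold application of \autoref{lem:join-local} together with the commutativity of the join, where the only small point to check is that \autoref{lem:join-local} genuinely applies in the second step (the roles of $M$ and $N$ are swapped, which is fine because commutativity of the join lets us replace $(\join{X}{N})$-local by $(\join{N}{X})$-local before the second invocation).
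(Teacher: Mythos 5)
Your proof is correct and follows essentially the same route as the paper's: unfold the definition of connectedness, apply \autoref{lem:join-local} to pass from $(\join{M}{N})$-local to $(\join{X}{N})$-local, and then conclude by symmetry of the join with a second application of the same lemma. You merely spell out the "by symmetry" step (the commutativity rewrites and the substitution of roles in the second invocation) that the paper leaves implicit.
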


\begin{proof}
It suffices to show that any $(\join{M}{N})$-local type is $(\join{X}{Y})$-local.
Let $Z$ be an $(\join{M}{N})$-local type.
Since $Z$ is assumed to be $(\join{M}{N})$-local, it follows by \autoref{lem:join-local} that $Z$ is $(\join{X}{N})$-local. By symmetry of the join, it also follows that $Z$ is $(\join{X}{Y})$-local.
\end{proof}

\begin{thm}[Join connectivity theorem]\label{thm:join-connectivity}
Consider an $M$-connected map $f:A\to X$ and an $N$-connected map $g:B\to X$. Then $\join{f}{g}$ is $(\join{M}{N})$-connected.
\end{thm}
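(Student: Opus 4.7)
The plan is to reduce the theorem about maps to the already-proved statement about types, namely \autoref{thm:simple-join}, by passing to fibers.

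First, I would unfold the definition of an $M$-connected map: by \autoref{defn:connected}, saying that $\join{f}{g}$ is $(\join{M}{N})$-connected means that for every $x:X$, the fiber $\hfib{\join{f}{g}}{x}$ is $(\join{M}{N})$-connected as a type. So the goal is fiberwise.

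Next, I would invoke the fiber formula for the join of maps, \autoref{defn:join-fiber}, which provides an equivalence
\begin{equation*}
\hfib{\join{f}{g}}{x} \eqvsym \join{\hfib{f}{x}}{\hfib{g}{x}}.
\end{equation*}
Being $(\join{M}{N})$-connected is a property that transfers along equivalences (since it is defined through locality, and locality is preserved by equivalence), so it suffices to prove that the right-hand side is $(\join{M}{N})$-connected for each $x:X$.

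Now I would apply the hypotheses: since $f$ is $M$-connected, $\hfib{f}{x}$ is $M$-connected, and since $g$ is $N$-connected, $\hfib{g}{x}$ is $N$-connected. Then \autoref{thm:simple-join} applied to these two fibers immediately yields that their join is $(\join{M}{N})$-connected, which completes the proof.

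I don't expect a real obstacle here: the theorem is essentially a packaging result, combining the fiberwise description of the join of maps with the connectivity statement for joins of types. The only thing to be slightly careful about is to confirm that $(\join{M}{N})$-connectedness is invariant under equivalence of types, but this is immediate from the definition in terms of locality, since an equivalence $X\eqvsym Y$ identifies $X$-local types with $Y$-local types.
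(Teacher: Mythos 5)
Your proposal is correct and matches the paper's own proof, which is exactly the one-line combination of \autoref{defn:join-fiber} and \autoref{thm:simple-join}; you have simply spelled out the fiberwise reduction that the paper leaves implicit.
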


\begin{proof}
This follows from \autoref{thm:simple-join} and \autoref{defn:join-fiber}.
\end{proof}

\begin{thm}\label{thm:joinconstruction-connectivity}
Consider the factorization
\begin{equation*}
\begin{tikzcd}
A_n \arrow[dr,swap,"f^{\ast n}"] \arrow[r,"q_n"] & \im(f) \arrow[d] \\
& X
\end{tikzcd}
\end{equation*}
of $f^{\ast n}$ through the image $\im(f)$. 
Then the map $q_n$ is $(n-2)$-connected, for each $n:\N$.
\end{thm}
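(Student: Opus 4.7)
The plan is to identify the fibres of $q_n$ explicitly and then verify the claimed connectivity by induction on $n$.

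First, since $i_f:\im(f)\to X$ is an embedding and $f^{\ast n}=i_f\circ q_n$, for every $y:\im(f)$ the canonical map between fibres is an equivalence
\begin{equation*}
\hfib{q_n}{y}\;\simeq\;\hfib{f^{\ast n}}{i_f(y)},
\end{equation*}
by the familiar fact that fibres of a composite collapse onto those of the first factor when the second factor is an embedding (the fibre of an embedding at a point in its image is contractible). Combined with $n$ iterated applications of the fibrewise join formula of \autoref{defn:join-fiber} to the recursive definition $f^{\ast(n+1)}\defeq\join{f}{f^{\ast n}}$ with base case $f^{\ast 0}:\emptyt\to X$ --- a computation already performed in the proof of \autoref{thm:image} --- this identifies $\hfib{q_n}{y}$ with the $n$-fold iterated join $(\hfib{f}{i_f(y)})^{\ast n}$.

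Second, for every $y:\im(f)$ the fibre $\hfib{f}{i_f(y)}$ is merely inhabited. This is because $q_f:A\to\im(f)$ is surjective: the embedding $i_f$ has the universal property of the image factorization of $f$, so every point of $\im(f)$ merely has a preimage in $A$. In particular $\hfib{f}{i_f(y)}$ is $(-1)$-connected.

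Third, I conclude by induction on $n$ with the claim that the $n$-fold iterated join of a $(-1)$-connected type is $(n-2)$-connected. The base case $n=0$ is trivial, since the $(-2)$-truncation of any type is contractible. For the inductive step, write $(\blank)^{\ast(n+1)}$ as $\join{(\blank)}{(\blank)^{\ast n}}$, and invoke the numerical corollary of the join connectivity theorem (\autoref{thm:join-connectivity}) recorded in the introduction: the join of an $m$-connected and an $n$-connected type is $(m+n+2)$-connected. With $m=-1$ for the first factor and the inductive hypothesis giving connectivity $n-2$ for the second, the join has connectivity $(-1)+(n-2)+2 = n-1 = (n+1)-2$, as desired.

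The only step requiring genuine care is the fibre identification in the first paragraph, which relies on the fibrewise join formula and on $i_f$ being an embedding; once it is in place, the remainder is elementary bookkeeping around the connectivity arithmetic, using ingredients already established in the paper.
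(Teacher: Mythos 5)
Your proof is correct and is essentially the paper's own argument: both reduce the statement to the fact that (the fibers of) $q_n$ are $n$-fold joins of the $(-1)$-connected fibers of $f$ over points of the image, and then apply the join connectivity theorem together with the arithmetic $\bool^{\ast n}\simeq \Sn^{n-1}$. The only difference is packaging: the paper stays at the level of maps, first proving that postcomposition with an embedding distributes over the join so that $q_n = q_f^{\ast n}$ as a join power over $\im(f)$, and then invoking \autoref{thm:join-connectivity} directly, whereas you unwind to fibers immediately via the embedding $i_f$ and \autoref{defn:join-fiber} and run the induction on types rather than on maps.
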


\begin{proof}
We first show the assertion that, given a commuting diagram of the form
\begin{equation*}
\begin{tikzcd}
A \arrow[r,"q"] \arrow[dr,swap,"f"] & Y \arrow[d,"m"] & A' \arrow[l,swap,"{q'}"] \arrow[dl,"{f'}"] \\
& X
\end{tikzcd}
\end{equation*}
in which $m$ is an embedding, then $\join{f}{f'}=\join{(m\circ q)}{(m\circ q')}=m\circ (\join{q}{q'})$.
In other words, postcomposition with embeddings distributes over 
the join operation.

Note that, since $m$ is assumed to be an embedding, we have an equivalence of
type $\eqv{f(a)=f'(a)}{q(a)=q'(a)}$, for every $a:A$. Hence the pullback of
$f$ and $f'$ is equivalent to the pullback of $q$ along $q'$. Consequently, the
two pushouts
\begin{equation*}
\begin{tikzcd}
A\times_X A' \arrow[r,"\pi_2"] \arrow[d,swap,"\pi_1"] & A' \arrow[d] \\
A \arrow[r] & \join[X]{A}{A'}
\end{tikzcd}
\qquad\text{and}\qquad
\begin{tikzcd}
A\times_Y A' \arrow[r,"\pi_2"] \arrow[d,swap,"\pi_1"] & A' \arrow[d] \\
A \arrow[r] & \join[Y]{A}{A'}
\end{tikzcd}
\end{equation*}
are equivalent. Hence the claim follows.

As a corollary, we get that $q_n=q_f^{\ast n}$. Note that $q_f$ is surjective,
in the sense that $q_f$ is $\bool$-connected, where $\bool$ is the type of booleans%
\footnote{Recall that the $\bool$-local types are precisely the mere propositions.}.
Hence it follows that $q_n$ is $\bool^{\ast n}$-connected. 

Now recall that the $n$-th join power of $\bool$ is the $(n-1)$-sphere $\Sn^{n-1}$, and that
a type is $(\Sn^{n-1})$-connected if and only if it is $(n-2)$-connected.
\end{proof}

\section{The construction of the $n$-truncation}\label{sec:truncation}

In this section we will construct for any $n:\N$, the $n$-truncation on any univalent universe that contains
a natural numbers object and is closed under graph quotients.
We will do this via the modified join construction of \autoref{thm:modified-join}.
Recall that a $(-2)$-truncated type is simply a contractible type, and that
for $n\geq -2$ an $(n+1)$-truncated type is a type of which the identity types
are $n$-truncated. The $(-2)$-truncation is easy to construct: it sends
every type to the unit type $\unit$. Thus, we shall proceed by induction
on the integers greater or equal to $-2$, and assume that the universe admits
an $n$-truncation operation $\trunc{n}{\blank}:\UU\to\UU$ for a given $n$.

A suggestive way to think of the type $\trunc{n+1}{A}$ is as the quotient of $A$ modulo the
`$(n+1)$-equivalence relation' given by $\trunc{n}{a=b}$. 
Indeed, by Theorem 7.3.12 of \cite{hottbook} we have that the canonical map
\begin{equation*}
\trunc{n}{a=b}\to(\tproj{n+1}{a}=\tproj{n+1}{b})
\end{equation*}
is an equivalence, and the unit $\tproj{n+1}{\blank}:A\to \trunc{n+1}{A}$ is
a surjective map (it is in fact $(n+1)$-connected). 

\begin{thm}\label{thm:truncation}
In Martin-L\"of type theory with a univalent universe $\UU$ that is closed under
graph quotients we can define, for every $n\geq -2$, an $n$-truncation operation
\begin{equation*}
\trunc{n}{\blank} : \UU\to\UU
\end{equation*}
and for every $A:\UU$ a map
\begin{equation*}
\tproj{n}{\blank}:A\to\trunc{n}{A},
\end{equation*}
such that for each $A:\UU$ the type $\trunc{n}{A}$ is an $n$-truncated type satisfying the (dependent) universal property of $n$-truncation, that for every $P:\trunc{n}{A}\to\UU$ such that every $P(x)$ is $n$-truncated,
the canonical map
\begin{equation*}
\blank\circ\tproj{n}{\blank} : \Big(\prd{x:\trunc{n}{A}}P(x)\Big)\to\Big(\prd{a:A}P(\tproj{n}{a})\Big)
\end{equation*}
is an equivalence.
\end{thm}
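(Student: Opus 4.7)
The plan is to proceed by induction on $n \geq -2$, with base case $\trunc{-2}{A} \defeq \unit$. For the inductive step, assuming that $\trunc{n}{\blank} : \UU \to \UU$ has been constructed together with its dependent universal property, I build $\trunc{n+1}{A}$ as the image of a Yoneda-style map into the subuniverse of $n$-types, following the hierarchy pattern (propositional truncation, set-quotient, Rezk completion) outlined in the introduction.

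Let $\UU_n$ denote the subuniverse of $n$-truncated types in $\UU$. By univalence, $\UU_n$ is locally small and $(n+1)$-truncated; by global function extensionality the exponent $A \to \UU_n$ is then also locally small and $(n+1)$-truncated. Define
\begin{equation*}
R_A : A \to (A \to \UU_n), \qquad R_A(a)(b) \defeq \trunc{n}{a = b}.
\end{equation*}
Applying the modified join construction (\autoref{thm:modified-join}) to $R_A$, I set $\trunc{n+1}{A} \defeq \im'(R_A)$ and $\tproj{n+1}{\blank} \defeq q'_{R_A}$. Since $\trunc{n+1}{A}$ embeds into $A \to \UU_n$, it is automatically $(n+1)$-truncated.

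For the dependent universal property I would first compute the fibers of $\tproj{n+1}{\blank}$. Because $i'_{R_A}$ is an embedding, the identification $\tproj{n+1}{a} = \tproj{n+1}{a'}$ in $\trunc{n+1}{A}$ agrees with $R_A(a) = R_A(a')$ in $A \to \UU_n$, which by a Yoneda-style argument (using function extensionality, univalence on $\UU_n$, and the distinguished element $\tproj{n}{\mathsf{refl}_a}$) is equivalent to $\trunc{n}{a = a'}$. Hence $\hfib{\tproj{n+1}{\blank}}{\tproj{n+1}{a}} \eqvsym \sm{a' : A} \trunc{n}{a = a'}$. I would then verify that each such fiber is $\Sn^{n+2}$-connected: for any $(n+1)$-truncated type $B$ and any $f : (\sm{a' : A} \trunc{n}{a = a'}) \to B$, I claim $f$ is constant at $b_0 \defeq f(a, \tproj{n}{\mathsf{refl}_a})$. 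For each $(a', p)$ the goal $f(a', p) = b_0$ lives in an $n$-truncated type, so the inductive hypothesis on $\trunc{n}{\blank}$ lets me reduce $p$ to $\tproj{n}{q}$ for some $q : a = a'$, and path induction on $q$ then closes the goal.

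With $\tproj{n+1}{\blank}$ thus shown to be an $\Sn^{n+2}$-connected map, I would apply the join extension theorem (\autoref{thm:join-extension}) with $M \defeq \Sn^{n+2}$ and $N \defeq \emptyt$. Using $\Sn^{n+2} \ast \emptyt \eqvsym \Sn^{n+2}$ together with the fact that a map is $\emptyt$-local precisely when it is an equivalence, the theorem at once delivers the dependent universal property: for any family $P : \trunc{n+1}{A} \to \UU$ whose values are $(n+1)$-truncated (i.e.\ $\Sn^{n+2}$-local), precomposition with $\tproj{n+1}{\blank}$ is an equivalence. The main obstacle is the connectedness computation for the fibers, where the inductive hypothesis on the $n$-truncation is used in an essential way; everything else is a routine assembly of the results already developed in the paper.
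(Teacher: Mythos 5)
Your proposal follows essentially the same route as the paper: define $\trunc{n+1}{A}$ as the image of $a\mapsto\trunc{n}{a=\blank}$ via the modified join construction, reduce the dependent universal property to $\sphere{n+2}$-connectivity of $\tproj{n+1}{\blank}$ via the join extension theorem with $N\defeq\emptyt$, and prove that connectivity by reducing to fibers over image points and using the inductive universal property of $\trunc{n}{\blank}$ plus path induction (the paper factors this last step through \autoref{lem:local_id} and \autoref{lem:ap_connectivity}, and targets $A\to\UU$ rather than $A\to\UU_n$, but these are cosmetic differences). The one step you leave as a sketch --- the ``Yoneda-style'' equivalence $(\tproj{n+1}{a}=\tproj{n+1}{a'})\eqvsym\trunc{n}{a=a'}$ --- is precisely the paper's \autoref{lem:modal_contr}, where most of the technical work lives (an encode--decode argument that uses the universal property of the image to show the relevant total space is contractible), but you have correctly identified both the claim and its essential ingredients.
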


\begin{proof}[Construction]
As announced, we define the $n$-truncation operation by induction on $n\geq-2$,
with the trivial operation as the base case. Let $n:\N$ and suppose we have
an $n$-truncation operation as described in the statement of the theorem.

We first define the reflexive relation $\mathscr{Y}_n(A) : A \to A \to \UU$ by
\begin{equation*}
\mathscr{Y}_n(A)(a,b) \defeq \trunc{n}{a=b}.
\end{equation*}
Note that the codomain $(A\to\UU)$ of $\mathscr{Y}_n(A)$ is locally small since it is the exponent of
the locally small type $\UU$ by a small type $A$. Hence we we obtain the image
of $\mathscr{Y}_n(A)$ from the modified join construction of \autoref{thm:modified-join}.
This allows us to define
\begin{align*}
\trunc{n+1}{A} & \defeq \im'(\mathscr{Y}_n(A)) \\
\tproj{n+1}{\blank} & \defeq q'_{\mathscr{Y}_n(A)}
\end{align*}
For notational reasons, we shall just write $\im(\mathscr{Y}_n(A))$ for $\im'(\mathscr{Y}_n(A))$. 

We will show that $\trunc{n+1}{A}$ is indeed $(n+1)$-truncated in \autoref{cor:truncated} of \autoref{lem:modal_contr} below. Once this fact is established, it remains to verify the dependent universal property of $(n+1)$-truncation.
By the join extension theorem \autoref{thm:join-extension} (using $N\defeq \emptyt$), it suffices to show that the map $\tproj{n+1}{\blank}:A\to\trunc{n+1}{A}$ is $\sphere{n+2}$-connected. Note that $\tproj{n+1}{\blank}$ is surjective, so the claim that $\tproj{n+1}{\blank}$ is $\sphere{n+2}$-connected follows from \autoref{lem:ap_connectivity}, where we show that for any surjective map $f:A\to X$, if the action on paths is $M$-connected for any two points in $A$, then $f$ is $\susp(M)$-connected. To apply this lemma, we also need to know that $\tproj{n}{\blank}:A\to\trunc{n}{A}$ is $\sphere{n+1}$-connected. This is shown in Corollary 7.5.8 of \cite{hottbook}.
\end{proof}

Before we prove that $\im(\mathscr{Y}_n(A))$ is $(n+1)$-truncated, we prove the stronger claim that $\im(\mathscr{Y}_n(A))$ has the desired identity types:

\begin{lem}\label{lem:modal_contr}
For every $a,b:A$, we have an equivalence
\begin{equation*}
\eqv{\trunc{n}{a=b}}{(\mathscr{Y}_n(A)(a)=\mathscr{Y}_n(A)(b))}.
\end{equation*}
\end{lem}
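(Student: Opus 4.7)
The plan is an encode--decode argument. First I would unpack the right-hand side: by function extensionality and univalence,
\[
(\mathscr{Y}_n(A)(a) = \mathscr{Y}_n(A)(b)) \eqvsym \prd{c:A}\eqv{\trunc{n}{a=c}}{\trunc{n}{b=c}}.
\]
Each $\trunc{n}{a=c}$ and $\trunc{n}{b=c}$ is $n$-truncated by construction, and the type of equivalences between two $n$-truncated types is itself $n$-truncated (being a $\Sigma$ of an $n$-truncated function space with a mere proposition). Hence the product on the right is $n$-truncated, which will be crucial for invoking the universal property of $\trunc{n}{\blank}$ into it.

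Next I would define the forward map $\varphi:\trunc{n}{a=b}\to(\mathscr{Y}_n(A)(a)=\mathscr{Y}_n(A)(b))$. Since the codomain is $n$-truncated, the universal property of $\trunc{n}{a=b}$ reduces this to specifying $\varphi(p)\defeq\apfunc{\mathscr{Y}_n(A)}(p)$ on underlying $p:a=b$. Under the identification above, $\varphi(p)$ is the family of equivalences $c\mapsto \apfunc{\trunc{n}{\blank}}(\lam{q}\ct{\opp{p}}{q})$, with inverse given by concatenation with $p$. For the backward map $\psi$, I would use the Yoneda-like trick: given $e:\prd{c:A}\eqv{\trunc{n}{a=c}}{\trunc{n}{b=c}}$, take $e(a)(\tproj{n}{\refl_a})\in\trunc{n}{b=a}$ and apply the functorial action of $\trunc{n}{\blank}$ on path inversion to obtain $\psi(e)\in\trunc{n}{a=b}$.

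The round trip $\psi\circ\varphi=\idfunc$ reduces by the universal property of $\trunc{n}{a=b}$ to $p:a=b$ and then by path induction to the case $p\defeq\refl_a$, in which the concatenation with $\opp{p}$ is the identity and the final inversion leaves $\tproj{n}{\refl_a}$ untouched. The harder round trip $\varphi\circ\psi=\idfunc$ is the main obstacle and amounts to a Yoneda-style coherence: a family $e$ of equivalences is determined by its value $e(a)(\tproj{n}{\refl_a})$. I would reduce by function extensionality to pointwise equalities in the $n$-truncated types $\eqv{\trunc{n}{a=c}}{\trunc{n}{b=c}}$, then again by the universal property of $\trunc{n}{\blank}$ to equalities on untruncated $q:a=c$, and finally apply path induction on $q$ together with the naturality of $e$ as a dependent function (the transport lemma identifying $e(c)\circ \apfunc{\trunc{n}{\blank}}(\lam{r}\ct{r}{q})$ with $\apfunc{\trunc{n}{\blank}}(\lam{r}\ct{r}{q})\circ e(a)$) to identify $e(c)(\tproj{n}{q})$ with $\apfunc{\trunc{n}{\blank}}(\lam{r}\ct{\opp{\psi(e)}}{r})(\tproj{n}{q})$, which is exactly $\varphi(\psi(e))(c)(\tproj{n}{q})$.
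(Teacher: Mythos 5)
Your argument is correct, but it takes a genuinely different route from the paper's. You work entirely in $A\to\UU$: after unpacking $\mathscr{Y}_n(A)(a)=\mathscr{Y}_n(A)(b)$ by function extensionality and univalence into $\prd{c:A}\eqv{\trunc{n}{a=c}}{\trunc{n}{b=c}}$, you exhibit an explicit quasi-inverse pair, decoding by Yoneda-style evaluation at $\tproj{n}{\refl{a}}$, and verify both round trips. The paper instead runs the encode--decode argument in total-space form: it defines $Q_b(P)\defeq P(b)$ on all of $\im(\mathscr{Y}_n(A))$ and proves that $\sm{P:\im(\mathscr{Y}_n(A))}P(b)$ is contractible with center $\pairr{\mathscr{Y}_n(A)(b),\tproj{n}{\refl{b}}}$; the extra ingredient there is the universal property of the image, which reduces a statement about an arbitrary $P$ in the image to the generating case $P\jdeq\mathscr{Y}_n(A)(x)$. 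The contractibility formulation buys a characterization of the identity types of the image with an \emph{arbitrary} second argument $P$ (not just another generator), and it absorbs your harder round trip into checking that a single canonically constructed family $\trunc{n}{b=a}\to P(a)$ is a fiberwise equivalence. Your version buys a more elementary proof of the lemma exactly as stated, never invoking the image or its universal property (which suffices for the intended corollary, since the image inclusion is an embedding). One streamlining remark: the `naturality of $e$' you appeal to in the second round trip is automatic for any dependent function (it is just its behaviour under transport), and if you perform the based path induction on the pair $(c,q)$ before invoking it, the whole verification collapses to the double-inversion computation at $e(a)(\tproj{n}{\refl{a}})$.
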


\begin{proof}
To characterize the identity type of $\im(\mathscr{Y}_n(A))$ we wish to apply
the encode-decode method. Thus, we need to provide for every $b:A$ a type 
family $Q_b:\im(\mathscr{Y}_n(A))\to\UU$ with a point $q_b:Q_b(\mathscr{Y}_n(A)(b))$,
such that the total space
\begin{equation*}
\sm{P:\im(\mathscr{Y}_n(A))} Q_b(P)
\end{equation*}
is contractible. Moreover, it must be the case that $\eqv{Q_b(\mathscr{Y}_n(A)(a))}{\trunc{n}{a=b}}$ for any $a:A$. 

To construct $Q_b$, note that for any $b:A$, the image inclusion $i:\im(\mathscr{Y}_n(A))\to (A\to\UU)$ defines 
a type family $Q_b:\im(\mathscr{Y}_n(A))\to\UU$ by $Q_b(P)\defeq P(b)$. With this definition for $Q_b$ it follows that $Q_b(\mathscr{Y}_n(A)(a))\jdeq\mathscr{Y}_n(A)(a,b)\jdeq\trunc{n}{a=b}$, as desired. Moreover, we have a reflexivity term $\tproj{n}{\refl{b}}$ in $\trunc{n}{b=b}$, so it remains to prove that the total space 
\begin{equation*}
\sm{P:\im(\mathscr{Y}_n(A))}P(b)
\end{equation*}
of $Q_b$ is contractible. For the center of contraction we take the pair
$\pairr{\mathscr{Y}_n(A)(b),\tproj{n}{\refl{b}}}$.
Now we need to construct a term of type
\begin{equation*}
\prd{P:\im(\mathscr{Y}_n(A))}{y:P(b)} \pairr{\mathscr{Y}_n(A)(b),\tproj{n}{\refl{b}}}=\pairr{P,y}.
\end{equation*}
Since $\mathscr{Y}_n(A)(b,a)\jdeq\trunc{n}{b=a}$, it is equivalent to construct a term of type
\begin{equation*}
\prd{P:\im(\mathscr{Y}_n(A))}{y:P(b)}\sm{\alpha:\prd{a:A} \eqv{\trunc{n}{b=a}}{P(a)}} \alpha_b(\tproj{n}{\refl{b}})=y.
\end{equation*}
Let $P:\im(\mathscr{Y}_n(A))$ and $y:P(b)$. Then $P(a)$ is $n$-truncated for any $a:A$. Therefore, to construct a map
$\alpha(P,y)_a:\trunc{n}{b=a}\to P(a)$, it suffices to construct a map of type $(b=a)\to P(a)$. This may be done by
path induction, using $y:P(b)$. Since it follows that $\alpha(P,y)_b(\tproj{n}{\refl{b}})=y$, it only remains to show that each $\alpha(P,y)_a$ is an equivalence.  

Note that the type of those $P:\im(\mathscr{Y}_n(A))$ such that for all $y:P(b)$ and all $a:A$ the map $\alpha(P,y)_a$ is an equivalence, is a subtype of $\im(\mathscr{Y}_n(A))$, we may use the universal property of the image of $\mathscr{Y}_n(A)$: it suffices to lift
\begin{equation*}
\begin{tikzcd}
& \sm{P:\im(\mathscr{Y}_n(A))}\prd{y:P(b)}{a:A}\isequiv(\alpha(P,y)_a) \arrow[d] \\
A \arrow[ur,densely dotted] \arrow[r,swap,"\mathscr{Y}_n(A)"] & \im(\mathscr{Y}_n(A)).
\end{tikzcd}
\end{equation*}
In other words, it suffices to show that 
\begin{equation*}
\prd{x:A}{y:\mathscr{Y}_n(A)(x,b)}{a:A}\isequiv(\alpha(\mathscr{Y}_n(A)(x),y)_a).
\end{equation*}
Thus, we want to show that for any $y:\trunc{n}{x=b}$, the map $\trunc{n}{a=b}\to\trunc{n}{x=b}$ constructed above is an equivalence.
Since the fibers of this map are $n$-truncated, and $\iscontr(X)$ of an $n$-truncated type $X$ is always $n$-truncated, we may assume that $y$ is of the form $\tproj{n}{p}$ for $p:x=b$. 
Now it is easy to see that our map of type $\trunc{n}{b=a}\to\trunc{n}{x=a}$ is the unique map which
extends the path concatenation $\ct{p}{\blank}$, as indicated in the diagram
\begin{equation*}
\begin{tikzcd}[column sep=8em]
(b=a) \arrow[r,"\ct{p}{\blank}"] \arrow[d] & (x=a) \arrow[d] \\
\trunc{n}{b=a} \arrow[r,densely dotted,swap,"{\alpha(\mathscr{Y}_n(A)(x),y)_a}"] & \trunc{n}{x=a}.
\end{tikzcd}
\end{equation*}
Since the top map is an equivalence, it follows that the map $\alpha(\mathscr{Y}_n(A)(x),y)_a$ is an equivalence.
\end{proof}

\begin{cor}\label{cor:truncated}
The image $\im(\mathscr{Y}_n(A))$ is an $(n+1)$-truncated type. 
\end{cor}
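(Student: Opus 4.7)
The plan is to unfold the definition of $(n+1)$-truncatedness and then pull everything back to the base type $A$ via surjectivity of the canonical map $q'_{\mathscr{Y}_n(A)} : A \to \im(\mathscr{Y}_n(A))$, so that we can invoke the preceding lemma.

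Concretely, I would first recall that a type $X$ is $(n+1)$-truncated precisely when, for every $P, Q : X$, the identity type $P = Q$ is $n$-truncated. So the goal reduces to showing
\begin{equation*}
\prd{P,Q : \im(\mathscr{Y}_n(A))} \isntype{n}(P = Q).
\end{equation*}
Being $n$-truncated is a mere proposition, so this entire statement is a mere proposition in $P$ and $Q$. This is the key observation that lets us use surjectivity: the map $q'_{\mathscr{Y}_n(A)} : A \to \im(\mathscr{Y}_n(A))$ produced by \autoref{thm:modified-join} is surjective, so every $P : \im(\mathscr{Y}_n(A))$ is merely of the form $\mathscr{Y}_n(A)(a)$ for some $a : A$, and similarly for $Q$. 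Since our goal is a proposition, we may eliminate the propositional truncations and assume $P \jdeq \mathscr{Y}_n(A)(a)$ and $Q \jdeq \mathscr{Y}_n(A)(b)$ for some $a, b : A$.

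At this point \autoref{lem:modal_contr} finishes the job: it produces an equivalence
\begin{equation*}
\eqv{(\mathscr{Y}_n(A)(a) = \mathscr{Y}_n(A)(b))}{\trunc{n}{a=b}},
\end{equation*}
and the right-hand side is $n$-truncated by construction of the $n$-truncation in the inductive hypothesis. Since $n$-truncatedness transports across equivalences, we conclude that $\mathscr{Y}_n(A)(a) = \mathscr{Y}_n(A)(b)$ is $n$-truncated, as required.

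The only step that needs real care is the reduction via surjectivity; the rest is bookkeeping. Strictly speaking, one should note that surjectivity of $q'_{\mathscr{Y}_n(A)}$ is part of the output of \autoref{thm:modified-join}, and that eliminating the $(-1)$-truncations is justified by \autoref{defn:proptrunc} together with the fact that the target \emph{is} a mere proposition (being $n$-truncated is). No other ingredients are needed.
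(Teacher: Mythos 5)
Your proof is correct and is exactly the intended argument: the paper leaves \autoref{cor:truncated} without an explicit proof because it follows from \autoref{lem:modal_contr} in precisely the way you describe — reduce $(n{+}1)$-truncatedness to $n$-truncatedness of identity types, use surjectivity of $q'_{\mathscr{Y}_n(A)}$ (from \autoref{thm:modified-join}) together with the fact that being $n$-truncated is a mere proposition to restrict attention to points of the form $\mathscr{Y}_n(A)(a)$, and then transport $n$-truncatedness of $\trunc{n}{a=b}$ across the equivalence of the lemma. No discrepancies with the paper's route.
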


Before we are able to show that for any surjective map $f:A\to X$, if the action on paths is $M$-connected for any two points in $A$, then $f$ is $\susp(M)$-connected, we show that a type is $\susp(M)$-connected precisely when its identity types are $M$-connected.

\begin{lem}\label{lem:local_id}
Let $M$ be a type. Then a type $X$ is $(\join{\bool}{M})$-local
if and only if all of its identity types are $M$-local. 
\end{lem}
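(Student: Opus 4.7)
The plan is to derive the statement as a direct application of \autoref{lem:equivalent-extension-problems} with $A \defeq \bool$ and $A' \defeq M$. That lemma tells us that $X$ is $(\join{\bool}{M})$-local precisely when, for every $f : \bool \to X$, the type $\sm{b:X}\prd{t:\bool} f(t) = b$ is $M$-local; so everything reduces to identifying this family with the identity types of $X$.

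To carry out that identification, I would use the universal property of $\bool$ to replace a map $f : \bool \to X$ with a pair $(x, y) : X \times X$, defined by $x \defeq f(\mathsf{true})$ and $y \defeq f(\mathsf{false})$. The dependent function type $\prd{t:\bool} f(t) = b$ then unfolds to the product $(x = b) \times (y = b)$, and singleton contraction (applied to the pair consisting of $b$ together with the path $x = b$) produces the equivalence
\begin{equation*}
\sm{b:X}\prd{t:\bool} f(t) = b \;\eqvsym\; \sm{b:X}(x = b) \times (y = b) \;\eqvsym\; (y = x).
\end{equation*}
Being $M$-local is an equivalence-invariant property, so the family displayed above is $M$-local for every $f : \bool \to X$ if and only if the identity type $y = x$ is $M$-local for every $x, y : X$.

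Putting these observations together yields both directions of the biconditional simultaneously. I do not anticipate a real obstacle: the whole argument is a short calculation chaining \autoref{lem:equivalent-extension-problems} with two standard type-theoretic equivalences, and the only subtlety is keeping careful track of the direction of the identity type (it arises naturally as $y = x$ rather than $x = y$, but this is harmless because identity types are symmetric, or equivalently because the roles of $\mathsf{true}$ and $\mathsf{false}$ can be exchanged by the evident symmetry of $\bool$).
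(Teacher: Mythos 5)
Your proof is correct, but it follows a genuinely different route from the paper's. The paper argues directly: it passes to total spaces, observing that the map $\lam{p}{m}p : (x=y)\to(M\to(x=y))$ is a fiberwise equivalence iff the induced map $\big(\sm{x,y:X}x=y\big)\to\big(\sm{x,y:X}M\to(x=y)\big)$ is an equivalence, then contracts the domain to $X$ via $\lam{x}\pairr{x,x,\refl{x}}$ and identifies the codomain with $\susp(M)\to X$ using the universal property of the suspension as the pushout $\unit\leftarrow M\rightarrow\unit$ (together with the standard identification $\susp(M)\eqvsym\join{\bool}{M}$). You instead specialize \autoref{lem:equivalent-extension-problems} to $A\defeq\bool$, $A'\defeq M$, $B\defeq X$, and then compute the resulting family: a map $f:\bool\to X$ is a pair $(x,y)$, the type $\prd{t:\bool}f(t)=b$ unfolds to $(x=b)\times(y=b)$, and contracting the based path space $\sm{b:X}(x=b)$ leaves $(y=x)$. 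Both arguments ultimately rest on the same two ingredients --- the universal property of a pushout presentation of $\join{\bool}{M}$ and a singleton contraction --- but your version buys economy by reusing a lemma the paper has already proved for other purposes (it is the engine behind \autoref{lem:join-local}), at the cost of going through the slightly heavier general statement about extension problems rather than the bare-hands total-space manipulation. The only points to keep straight are the ones you already flag: the direction $y=x$ versus $x=y$ is immaterial by symmetry, and the identification of $\prd{t:\bool}f(t)=b$ with the binary product uses function extensionality, which is assumed globally. No gap.
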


\begin{proof}
The map
\begin{equation*}
\lam{p}{m}p : (x=y)\to (M\to (x=y))
\end{equation*}
is an equivalence if and only if the induced map on total spaces
\begin{equation*}
\lam{\pairr{x,y,p}}\pairr{x,y,\lam{m}p} : \Big(\sm{x,y:X}x=y\Big)\to\Big(\sm{x,y:X}M\to (x=y)\Big)
\end{equation*}
is an equivalence. 
Since the map $\lam{x}\pairr{x,x,\refl{x}}:X\to\sm{x,y:X}x=y$ is an equivalence,
the above map is an equivalence if and only if the map
\begin{equation*}
\lam{x}\pairr{x,x,\lam{m}\refl{x}} : X\to\Big(\sm{x,y:X}M\to (x=y)\Big)
\end{equation*}
is an equivalence. For every $x:X$, the triple $\pairr{x,x,\lam{m}\refl{x}}$
induces a map $\susp(M)\to X$. By uniqueness of the universal property,
it follows that this map is the constant map $\lam{m}x$.
Thus we see that $\lam{x}\pairr{x,x,\lam{m}\refl{x}}$ is an equivalence if
and only if the map
\begin{equation*}
\lam{x}{m}x : X \to (\susp(M)\to X)
\end{equation*}
is an equivalence. 
\end{proof}

\begin{lem}\label{lem:ap_connectivity}
Suppose $f:A\to X$ is a surjective map, with the property that for every
$a,b:A$, the map
\begin{equation*}
\mapfunc{f}(a,b):(a=b)\to (f(a)=f(b))
\end{equation*}
is $M$-connected. Then $f$ is $\susp(M)$-connected. 
\end{lem}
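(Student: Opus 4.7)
The plan is to translate the hypothesis on $\apfunc{f}$ into a statement about the fibers of $f$, and then combine the surjectivity of $f$ with the characterization of $\susp(M)$-local types from \autoref{lem:local_id}. The first step is to observe that for any $a,b:A$ and $p:f(a)=f(b)$, the fiber $\hfib{\apfunc{f}(a,b)}{p}$ is equivalent to the identity type between $\pairr{a,\refl{f(a)}}$ and $\pairr{b,\opp{p}}$ in $\hfib{f}{f(a)}$, by the standard transport computation for identity-type families. Hence the $M$-connectedness of $\apfunc{f}$ is equivalent to the statement that for every $y:X$ and every $u,v:\hfib{f}{y}$, the identity type $u=v$ is $M$-connected. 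Combined with the surjectivity of $f$, which says that each fiber is $\bool$-connected, the theorem reduces to the following claim about an arbitrary type $F$: if $F$ is $\bool$-connected and every identity type of $F$ is $M$-connected, then $F$ is $\susp(M)$-connected.

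For this reduced claim, I would unfold \autoref{defn:connected} and use \autoref{lem:local_id} to fix a type $B$ whose identity types are $M$-local, and show that $B$ is $F$-local, i.e., that for every $g:F\to B$ the type $T_g\defeq \sm{b:B}\prd{x:F}g(x)=b$ is contractible. Since contractibility is a mere proposition, I may use the $\bool$-connectedness of $F$ to extract $x_0:F$ and take $b_0\defeq g(x_0)$ as the first coordinate of the centre. For the second coordinate $H_0:\prd{x:F}g(x)=b_0$, the key observation is that for each $x:F$ the type $g(x)=b_0$ is $M$-local while $x=x_0$ is $M$-connected, so by \autoref{defn:connected} the constant map $(g(x)=b_0)\to ((x=x_0)\to (g(x)=b_0))$ is an equivalence. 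I let $H_0(x)$ be the unique preimage of $\apfunc{g}:(x=x_0)\to (g(x)=g(x_0))$ under this equivalence; evaluation at $\refl{x_0}$ then shows $H_0(x_0)=\refl{g(x_0)}$.

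To conclude contractibility, given any $(b,K)\in T_g$, I would take $p\defeq K(x_0):b_0=b$ and verify $\lam{x}\ct{H_0(x)}{p}=K$ pointwise by the same trick: the equation $\ct{H_0(x)}{K(x_0)}=K(x)$ lies in the $M$-local type $g(x)=b$, so it suffices to show the two maps $q\mapsto \ct{H_0(x)}{K(x_0)}$ and $q\mapsto K(x)$ from $(x=x_0)$ to $g(x)=b$ agree as functions. By the definition of $H_0$ the first map equals $\lam{q}\ct{\apfunc{g}(q)}{K(x_0)}$, and naturality of the dependent function $K$ (the standard transport formula in identity-type bundles) gives $K(x)=\ct{\apfunc{g}(q)}{K(x_0)}$ for every $q:x=x_0$, so the two indeed agree. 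The main obstacle I anticipate is the bookkeeping of the transport/naturality step; the conceptual content is simply the double application of the equivalence granted by $M$-connectedness of the identity types of $F$ combined with $M$-locality of the identity types of $B$.
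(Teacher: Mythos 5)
Your argument is correct, and it reaches the conclusion by a decomposition that differs from the paper's in an instructive way. The paper likewise uses surjectivity to reduce to the fiber $\fib{f}{f(a)}$ with its canonical point $\pairr{a,\refl{f(a)}}$, and likewise invokes \autoref{lem:local_id} so that identity types of the $\susp(M)$-local target are $M$-local; but from there it keeps the hypothesis on $\mapfunc{f}$ in its original form and uses it as an induction principle: the required homotopy $\prd{b:A}{p:f(a)=f(b)}\,g(a,\refl{f(a)})=g(b,p)$ takes values in $M$-local types, so it suffices to construct it after precomposing with the $M$-connected map $\mapfunc{f}(a,b)$, after which path induction on $a=b$ finishes in one line. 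You instead first translate the hypothesis into the equivalent statement that the identity types of each fiber are $M$-connected (via the identification of $\hfib{\mapfunc{f}(a,b)}{p}$ with an identity type in $\hfib{f}{f(a)}$, which is correct), and then prove the self-contained claim that an inhabited type with $M$-connected identity types is $\susp(M)$-connected, contracting $\sm{b:B}\prd{x:F}g(x)=b$ by two applications of the unique-extension equivalence between $g(x)=b$ and $(x=x_0)\to(g(x)=b)$, together with the naturality of $\apfunc{g}$ and of $K$. The paper's route is shorter because its final step is a bare path induction on $A$; yours costs the transport bookkeeping you anticipate, but it isolates a reusable converse to \autoref{lem:local_id} (the ``connected'' counterpart of ``local'') and never extends along $\mapfunc{f}$ itself --- your two fiberwise extension steps are exactly the shadows of the paper's single extension along $\mapfunc{f}(a,b)$. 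Both arguments are valid.
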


\begin{proof}
We have to show that $\fib{f}{x}$ is $\susp(M)$-connected for each $x:X$. 
Since this is a mere proposition, and we assume that $f$ is surjective, it
is equivalent to show that $\fib{f}{f(a)}$ is $\susp(M)$-connected for each $a:A$. 
Let $Y$ be a $\susp(M)$-local type. 
For every $g:\fib{f}{f(a)}\to Y$ be a map we have the point $\theta(g)\defeq g(a,\refl{f(a)})$ in $Y$,
so we obtain a map
\begin{equation*}
\theta : (\fib{f}{f(a)}\to Y)\to Y
\end{equation*}
It is clear that $\theta(\lam{\pairr{b,p}}y)=y$, so it remains to show that
for every $g:\fib{f}{f(a)}\to Y$ we have $\lam{\pairr{b,p}}\theta(g)=g$.
That is, we must show that
\begin{equation*}
\prd{b:A}{p:f(a)=f(b)} g(a,\refl{f(a)})=g(b,p).
\end{equation*}
Using the assumption that $Y$ is $\susp(M)$-connected, it follows from
\autoref{lem:local_id} that the type $g(a,\refl{f(a)})=g(b,p)$ is $M$-connected,
for every $b:A$ and $p:f(a)=f(b)$.
Therefore it follows, since the map $\mapfunc{f}(a,b):(a=b)\to(f(a)=f(b))$ is connected, that our goal is equivalent to
\begin{equation*}
\prd{b:A}{p:a=b} g(a,\refl{f(a)})=g(b,\mapfunc{f}(a,b,p)).
\end{equation*}
This follows by path induction. 
\end{proof}

\section{Summary and conclusion}

In this paper we have worked in Martin-L\"of's dependent type theory with
global function extensionality, with a univalent universe which is closed
under graph quotients satisfying a global induction principle. 

In our main theorem (\autoref{thm:modified-join}) 
we showed that for any $f:A\to X$ with $A:\UU$ and $X$ locally small with respect to $\UU$,
the image of $f$ can be constructed in $\UU$. 
We used this construction of images to construct 
set-quotients (\autoref{cor:setquotients}), 
and the Rezk completion of a precategory (\autoref{cor:rezkcompletion}),
and we showed that the $n$-truncations can be described in any universe which is closed under graph quotients (\autoref{thm:truncation}). 
We note that in these constructions we need neither a propositional resizing axiom as proposed in \cite{hottbook}, nor recursive higher inductive types, nor higher inductive types with
higher path constructors.

Via a generalization of the construction of the $n$-truncation, we are also able to construct for any
modality $\modal$, the modality of $\modal$-separated types. For the definition
of modality in homotopy type theory we refer the reader to \S 7.7 of \cite{hottbook},
and the details of the construction of the modality of $\modal$-separated types
will appear in \cite{RijkeShulmanSpitters}. 

\mentalpause
\paragraph*{\bf Acknowledgments.}
I would like to thank Steve Awodey and Ulrik Buchholtz for the many insightful discussions, and for carefully reading the drafts of this paper.
I gratefully acknowledge the support of the Air Force Office of Scientific Research through MURI grant FA9550-15-1-0053.

\bibliographystyle{plain}
\bibliography{refs}

\end{document}